\documentclass[11pt,a4paper]{amsart}
\usepackage{amsmath,amssymb,amsthm,mathtools}
\usepackage{upgreek}
\usepackage{enumitem}
\usepackage{geometry}
\usepackage{xcolor}
\usepackage{hyperref}
\usepackage[numbers]{natbib}

\theoremstyle{plain}
\newtheorem{theorem}{Theorem}[section]
\newtheorem{lemma}[theorem]{Lemma}
\newtheorem{proposition}[theorem]{Proposition}
\newtheorem{proposition*}{Proposition}
\newtheorem{corollary}[theorem]{Corollary}

\theoremstyle{definition}

\newtheorem{example}{Example}

\theoremstyle{remark}
\newtheorem{remark}{Remark}[section]

\numberwithin{equation}{section}

\DeclareMathOperator{\E}{\mathbb{E}}
\DeclareMathOperator{\p}{\mathbb{P}}

\title[Catastrophe--dispersion processes]{Catastrophe--dispersion models in random and varying environments across generations}
\author{Lucas R. de Lima \and Fábio P. Machado}

\address{Department of Mathematics\\
Institute of Mathematical Sciences and Computation (ICMC)\\
University of S\~ao Paulo\\
Av. Trabalhador S\~ao-Carlense, 400\\
13566-590 S\~ao Carlos,  SP\\
Brazil}
\email{lrdelima@icmc.usp.br}

\address{Department of Statistics\\
Institute of Mathematics and Statistics (IME)\\
University of S\~ao Paulo\\
Rua do Mat\~ao, 1010\\
05508-090 S\~ao Paulo, SP\\
Brazil}
\email{fmachado@ime.usp.br}

\thanks{{\bf Funding:} Research supported by grants \#2023/13453-5 and \#2024/06021-4, S\~ao Paulo Research Foundation (FAPESP). This work was also supported by the National Institute of Science and Technology in Stochastic Modeling and Complexity (INCT-NUMEC), funded by CNPq (grant no. 408590/2024-6)}

\keywords{Branching processes, random environment, varying environment, catastrophe, dispersal, survival probability.}

\subjclass[2020]{60J80, 60K37, 92D25}

\begin{document}

\begin{abstract}
We study a class of branching processes in which the offspring distribution is not specified directly but is induced by a cycle of internal colony growth, catastrophic reduction and structured dispersal. The parameters governing growth, survival and dispersal are allowed to vary deterministically or randomly from one generation to the next, giving rise to branching processes in varying and random environments with implicitly defined offspring laws. We show that survival and extinction are governed entirely by the associated log-mean process, exactly as in the classical theory. The paper treats four qualitatively different dispersal mechanisms and establishes a universal ordering of the induced offspring means. For Poissonian growth with binomial survival, explicit thresholds are obtained that determine extinction or survival uniformly over all four mechanisms. A series of ecologically motivated examples with Yule–Simon growth illustrates the versatility of the framework.
\end{abstract}

\maketitle

\section{Introduction}

Branching processes subject to catastrophic events have a long history in probability theory and applications, particularly in population dynamics,
epidemiology, and reliability theory.  In many classical models, catastrophes are represented as sudden population reductions followed by ordinary reproduction dynamics; see, for instance, the early work of Brockwell, Gani, and Resnick~\cite{brockwell_1982,brockwell_1986} as well as more recent contributions on binomial and geometric catastrophes \cite{artalejo_2007,kapodistria_2016,duque_2023,junior_2021,junior_2023}. A parallel line of research has shifted the focus to situations where catastrophes are followed by \emph{dispersion}: the surviving individuals do not remain together but instead found new colonies or communities according to some structured rule.  Such mechanisms arise naturally in ecology (metapopulations, fragmentation), epidemiology (local extinction and recolonization), and networked systems~\cite{lanchier_2024}.

The stochastic processes studied in this paper belong to this latter class. They are closely related to the catastrophe--dispersion models introduced and analysed in~\cite{junior_2016,machado_2017,machado_2018,schinazi_2015,amaya2025}, where a colony grows internally until a random catastrophe time, a random number of individuals survive, and the survivors disperse to found new colonies.  Our work broadens this programme by allowing the parameters that govern growth, catastrophe, survival, and dispersal to vary from one generation to the next, either deterministically or randomly.  (For a broad overview of related stochastic interacting systems we refer the reader to the recent monograph of Lanchier~\cite{lanchier_2024}.)

From a probabilistic point of view, the catastrophe–dispersion mechanism generates branching structures whose offspring distributions are \emph{not primitive}: they are not specified directly but are instead induced by the composition of internal growth, catastrophic survival, and dispersal.  This places the resulting colony‑count processes outside the standard setting of Bienaymé–Galton–Watson or Crump–Mode–Jagers processes, and forces us to verify that the classical theory of branching processes in varying and random environments still applies.

That theory was initiated by Jagers~\cite{jagers1974} for varying environments and by Smith and Wilkinson~\cite{smith_wilkinson_1969} for random environments; a comprehensive modern treatment is given in the monograph of Kersting and Vatutin~\cite{kersting_vatutin_2017}. The central insight is that the asymptotic behaviour—survival or extinction—is governed by the associated log‑mean process \(S_n = \sum_{k=0}^{n-1} \log \mu_k\), where \(\mu_k\) denotes the expected number of offspring in generation \(k\).  Our first goal is therefore to show that, once the \emph{induced offspring means} are identified, the catastrophe–dispersion model fits naturally into this framework and that its long‑term fate is determined by the sign (or the limsup/liminf) of the log‑mean sequence.

A distinctive feature of the paper is the systematic treatment of four qualitatively different dispersion mechanisms, ranging from complete dispersal (every survivor founds a new colony) to capacity‑limited and spatially structured dispersal.  These mechanisms coincide with those studied in~\cite{junior_2016,machado_2018}; they capture different ecological constraints and lead to different moment properties of the induced offspring distributions.  While the main classification depends only on the means \(\mu_n\), the dispersion rule becomes crucial for subcritical asymptotics and for the strength of selection effects.

\medskip

\noindent\textbf{Organization of the paper.}
Section~\ref{sec:model-def} introduces the catastrophe--dispersion model and the four dispersal mechanisms D1--D4.  Section~\ref{sec:environments} defines varying and random environments, the induced offspring means, and establishes the fundamental ordering the offspring means. Section~\ref{sec:classification} contains the survival--extinction classification.  After
recalling the classical criteria, we treat Poissonian growth with binomial survival and obtain explicit thresholds that hold uniformly for all four dispersal mechanisms.  A series of ecologically motivated examples with Yule--Simon growth completes the analysis.

\medskip

\section{Model and Notation} \label{sec:model-def}

The model studied here builds on the catastrophe--dispersion framework introduced in~\cite{junior_2016,machado_2017,machado_2018,schinazi_2015}, where a colony grows internally between catastrophic events, a random fraction of individuals survive, and the survivors disperse to found new colonies.  We extend this setup by allowing the parameters governing growth, catastrophes, survival, and dispersal to vary from one generation to the next, thereby embedding the process into the classical theory of branching processes in varying and random environments.  For completeness, we briefly recall the colony life cycle and the branching structure.

\subsection{Colony life cycle}

A \emph{colony} evolves according to the following steps.

\medskip
\noindent\textbf{Internal growth.}
Let \(\eta(t)\), \(t\ge 0\), denote the size of a single colony at time \(t\), started from one individual, \(\eta(0)=1\).
We consider two canonical internal growth mechanisms:
\begin{itemize}
    \item \textbf{Yule growth:} \(\eta(t)\) is a pure birth process with birth rate \(\lambda>0\).
    \item \textbf{Poissonian growth:} \(\eta(t)\) is a Poisson random variable with mean \(\lambda t\), used either as an approximation or as a phenomenological model.
\end{itemize}
All results in this paper are stated abstractly in terms of the law of \(\eta(T)\); explicit choices will be treated in subsequent sections and companion papers.

\medskip
\noindent\textbf{Catastrophe time.}
A catastrophe occurs at a random time
\[
T \sim \mathrm{Exp}(\theta),
\]
independent of the internal growth \(\eta(\cdot)\).
The parameter \(\theta>0\) represents the instantaneous rate of catastrophic events. In varying or random environments, \(\theta\) may change from one generation to the next. 

\begin{example}[Poissonian growth with exponential catastrophe time] \label{ex:poissonian_growth}
Let $\eta(t) = 1 + X(t)$, where $X(t) \sim \mathrm{Poi}(\lambda t)$ is a Poisson process, and let $T \sim \mathrm{Exp}(\theta)$ be independent of $N$. The probability generating function (pgf) of $\eta(T)$ is
\[
G_{\eta(T)}(z) := \mathbb{E}[z^{\eta(T)}] = \int_0^\infty \mathbb{E}[z^{\eta(T)} \mid T=t] \, \theta e^{-\theta t} \, dt.
\]

Conditional on $T=t$, we have
\[
\mathbb{E}[z^{\eta(T)} \mid T=t] = z \, \mathbb{E}[z^{X(t)}] = z \, e^{\lambda t (z-1)}.
\]

Hence,
\[
G_{\eta(T)}(z) = \int_0^\infty z \, e^{\lambda s (z-1)} \, \theta e^{-\theta s} \, ds = \frac{\theta z}{\theta + \lambda (1-z)}.
\]
This is the PGF of $\eta(T) \sim 1 + \mathrm{Geom}\!\left(\frac{\theta}{\theta+\lambda}\right)$ with support $\{1,2,3,\dots\}$.
\end{example}

\begin{example}[Yule-Simon growth]
Let $\eta(t)$ be a Yule process with birth rate $\lambda$, started from one individual, so that
\[
\mathbb P(\eta(t)=n)=e^{-\lambda t}(1-e^{-\lambda t})^{n-1},\qquad n\ge1.
\]
Let $T\sim \mathrm{Exp}(\theta)$ be independent of $\eta$. Conditional on $T=t$, we have
\[
\mathbb E[z^{\eta(T)}\mid T=t]
=
\frac{z e^{-\lambda t}}{1-(1-e^{-\lambda t})z}.
\]
Hence
\[
G_{\eta(T)}(z)
=
\int_0^\infty
\frac{z e^{-\lambda t}}{1-(1-e^{-\lambda t})z}\,
\theta e^{-\theta t}\,dt=
\frac{\theta}{\lambda}
\int_0^1
\frac{z\,u^{\theta/\lambda}}{1-(1-u)z}\,du.
\]
Using Euler's integral representation of the Gauss hypergeometric function,
\[
G_{\eta(T)}(z)
=
z\frac{\theta/\lambda}{1+\theta/\lambda}
\,{}_2F_1\!\left(1,1;2+\theta/\lambda;z\right),
\qquad |z|<1.
\]
In particular, $\eta(T)$ has a Yule--Simon distribution with parameter $\rho=\theta/\lambda$; consequently $\mathbb P(\eta(T)\geq n) = \mathcal{O}(n^{-\theta/\lambda})$. Therefore, for Yule growth with birth rate $\lambda$ observed at an independent exponential time $T\sim\mathrm{Exp}(\theta)$, one has
\[
\mathbb E[\eta(T)] = \frac{\theta}{\theta-\lambda}\quad\text{if }\theta>\lambda,
\]
and $\mathbb E[\eta(T)]=+\infty$ otherwise.
\end{example}
\medskip
\noindent\textbf{Survivors.}
Let \(N_{\eta(T)}\) denote the number of survivors immediately after the catastrophe. Conditionally on \(\eta(T)=m\), the law of \(N_m\) is allowed to depend on the environment and models the severity of the catastrophe. Throughout, we assume only that
\[
0 \le N_m \le m \quad \text{a.s.},
\]
and that the conditional law of \(N_m\) is stochastically non-decreasing in \(m\). We list below some typical examples:
\begin{itemize}
    \item \textbf{Uniform catastrophes.:} The number of survivors of the catastrophe is uniformly distributed between $0$ and $m-1$, \textit{i.e.},  $N_m \sim \mathrm{Unif}_{\{0,1, \dots, m-1\}}$.

    \item \textbf{Binomial catastrophes.:} The classical model of catastrophe that considers that, independently, each individual survives the catastrophe with probability $p$. Therefore, $N_m \sim \mathrm{Bin}(m,p)$.

    \item \textbf{Geometric catastrophes.:} In this case, the catastrophe hits the individuals sequentially and independently, the catastrophic event ceases with probability $p$ when affecting each individual. We model it considering a random variable $G \sim \mathrm{Geom}(p)$ and $N_m := (m-G)^+$.
\end{itemize}
\medskip
\noindent\textbf{Dispersion and founding of new colonies.}
The survivors disperse and establish new colonies according to a dispersion rule. Given \(r\) \textit{survivors} (the \textit{remaining population}), the number of newly founded colonies is
\[
\Delta(r),
\]
where \(\Delta\) is a dispersion operator taking values in $\{0,1,\dots, r\}$. In some dispersion models, an environmental resource parameter \(d\) is considered so that $\Delta(r)\le d$. This operator encodes the ecological constraints and spatial structure of dispersal.

\subsection{Dispersion mechanisms}

The paper focuses on four dispersion mechanisms, which coincide with
those introduced and studied in~\cite{junior_2016,machado_2017,machado_2018,schinazi_2015}. We distinguish between two qualitatively different regimes. Under (D1), each surviving individual independently founds a new colony, with no territorial constraint. In contrast, under (D2)--(D4) dispersion is territory-limited and colonies are founded by single individuals: each available territory can host at most one founder, inducing an additional layer of competition after the catastrophe.

\begin{description}
    \item[(D1) Full dispersal.]
    Every survivor founds a new colony at a distinct site:
    \[
    \Delta^{(1)}(r) = r.
    \]

    \item[(D2) Capacity-limited dispersal.]
    At most \(d\) new colonies can be founded:
    \[
    \Delta^{(2)}(r) = \min\{d,r\}.
    \]

    \item[(D3) Independent site occupation.]
    There are $d$ available sites. Conditionally on $N_{\eta(T)}=r$, each surviving individual independently chooses one of the $d$ sites with equal probability $1/d$. Let $r_j$ denote the number of individuals assigned to site $j$, so that $r=\sum_{j=1}^d r_j$ and
    \[
        (r_1,\dots,r_d)\sim \mathrm{Multinomial}\bigl(r;1/d,\dots,1/d\bigr).
    \] 
    A new colony is founded at a site if and only if it receives at least one individual.
    The number of founded colonies is
    \[
        \Delta^{(3)}(r) := \sum_{j=1}^d \mathbf 1_{\{r_j\ge1\}}.
    \]
    Equivalently, conditional on $r$,
    \[
        \mathbb P\left(\Delta^{(3)}(r)=k\right) = \binom dk \sum_{i=0}^k (-1)^i \binom ki \Bigl(\frac{k-i}{d}\Bigr)^{r}, \qquad k=0,\dots,d.
    \]

    \item[(D4) Uniform dispersal.]
    Given $r\in\mathbb N_0$ surviving individuals and $d\in\mathbb N$ available territories, individuals are distributed uniformly at random among the $d$ territories, that is, all occupancy vectors $(r_1,\dots,r_d)\in\mathbb N_0^d$ satisfying \[ r_1+\cdots+r_d = r\]are equally likely (stars and bars model). At most one colony can be founded per territory, and a territory is successfully colonized if and only if it receives at least one individual.

    The number of founded colonies $\Delta^{(4)}(r)$ then satisfies, for $k=1,\dots,\min\{d,r\}$,
    \[
    \mathbb P\bigl(\Delta^{(4)}(r)=k\bigr) =\frac{\binom{d}{k}\binom{r-1}{k-1}}{\binom{r+d-1}{r}}.
    \]
    Moreover, $\mathbb P\bigl(\Delta^{(4)}(r)=0\bigr)=\mathbf{1}_{{r=0}}$.
\end{description}
\medskip

These four mechanisms represent qualitatively different dispersal regimes: full dispersal (D1), hard resource limitation (D2), spatial competition (D3), and uniform dispersion effects (D4). Observe that the dispersal mechanisms D2--D4 are equivalent when $d=1$.

\begin{lemma}[Expectation under independent site occupation $D3$]
\label{lem:mean-D3}
Let $d\ge1$ and let $Y:=N_{\eta(T)}$ be an $\mathbb N_0$-valued random variable with probability generating function
\[
G_Y(s)=\mathbb E[s^Y], \qquad |s|<1.
\]
Under dispersal mechanism $D3$, the expected number of founded colonies is
\[
\mathbb E\!\left[\Delta^{(3)}(Y)\right]
=
d\left(1-G_Y\!\left(1-\frac1d\right)\right).
\]
\end{lemma}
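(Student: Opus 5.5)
We write $\Delta^{(3)}(Y)$ as a sum of site indicators and use linearity of expectation together with exchangeability of the sites. Conditionally on $Y=r$, the $r$ survivors choose among the $d$ sites independently and uniformly. Hence, for each fixed site $j$, the event $\{r_j=0\}$ means that every survivor avoided site $j$, and
\[
\mathbb P(r_j=0\mid Y=r)=\Bigl(1-\tfrac1d\Bigr)^{r}.
\]
This also holds for $r=0$, where both sides equal $1$, consistent with no colony being founded. Summing the conditional probabilities $1-(1-1/d)^r$ of the events $\{r_j\ge1\}$ over $j=1,\dots,d$ gives
\[
\mathbb E\bigl[\Delta^{(3)}(Y)\mid Y=r\bigr]=d\Bigl(1-\bigl(1-\tfrac1d\bigr)^{r}\Bigr).
\]

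Next we remove the conditioning. By the tower property,
\[
\mathbb E[\Delta^{(3)}(Y)]=\sum_{r\ge0}\mathbb P(Y=r)\,d\bigl(1-(1-1/d)^r\bigr)=d\bigl(1-\mathbb E[(1-1/d)^Y]\bigr)=d\bigl(1-G_Y(1-1/d)\bigr).
\]
All terms are nonnegative and bounded by $d$, so splitting the series is justified. There is no integrability issue even if $\mathbb E[Y]=\infty$, as in the Yule--Simon case with $\theta\le\lambda$.

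The only point needing care is the domain of $G_Y$. For $d\ge2$, $s=1-1/d\in[1/2,1)$ lies inside $|s|<1$. For $d=1$, $s=0$, and we read $G_Y(0)$ as $\mathbb P(Y=0)$, with the convention $0^0=1$. The formula then gives $1-\mathbb P(Y=0)=\mathbb P(Y\ge1)$, which matches $\Delta^{(3)}(Y)=\mathbf 1_{\{Y\ge1\}}$ when $d=1$.

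An alternative route would sum the explicit occupancy law of $\Delta^{(3)}(r)$ given in (D3) against $k$. That is unnecessary, since the indicator decomposition avoids the inclusion--exclusion sums entirely.
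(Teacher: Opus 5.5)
Your proof is correct and matches the paper's argument: write $\Delta^{(3)}(r)$ as a sum of site indicators, use $\mathbb P(r_j=0\mid Y=r)=(1-1/d)^r$, and average over $Y$ to obtain $d\bigl(1-G_Y(1-1/d)\bigr)$. Your extra remarks on integrability and on $d=1$ are fine, though $s=0$ already lies in $|s|<1$, so only the usual convention $0^0=1$ is needed there.
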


\begin{proof}
Fix $r\in\mathbb N_0$. Conditionally on $Y=r$, each of the $r$ surviving individuals independently chooses one of the $d$ sites with probability $1/d$ each.

For $j\in\{1,\dots,d\}$, define
\[
I_j:=\mathbf 1_{\{r_j\ge1\}},
\]
where $r_j$ is the number of individuals assigned to site $j$. Then
\[
\Delta^{(3)}(r)=\sum_{j=1}^d I_j.
\]

By linearity of expectation and symmetry, $\mathbb E[\Delta^{(3)}(r)]=\sum_{j=1}^d \mathbb E[I_j]=d\,\mathbb E[I_1]$. Now, $\mathbb E[I_1]=\mathbb P(r_1\ge1)=1-\mathbb P(r_1=0)$. The event $\{r_1=0\}$ means that none of the $r$ individuals chooses site $1$. Since choices are independent,
\[
\mathbb P(r_1=0)=\left(1-\frac1d\right)^r.
\]
Hence $\mathbb E[I_1]=1-\left(1-\frac1d\right)^r$, and therefore
\[
\mathbb E[\Delta^{(3)}(r)]
=
d\left(1-\left(1-\frac1d\right)^r\right).
\]
Thus
\[
\mathbb E[\Delta^{(3)}(Y)]
=
d\left(1-\sum_{r\ge0}\mathbb P(Y=r)\left(1-\frac1d\right)^r\right).
\]
We conclude that
\[
\mathbb E\!\left[\Delta^{(3)}(Y)\right]
=
d\left(1-G_Y\!\left(1-\frac1d\right)\right).
\]
This completes the proof.
\end{proof}

\begin{lemma}[Expectation under uniform dispersal $D4$]
\label{lem:mean-D4}
Let $d\ge1$ and let $Y:=N_{\eta(T)}$ be an $\mathbb N_0$-valued random variable with probability generating function
\[
G_Y(z)=\mathbb E[z^Y], \qquad 0\le z\le1.
\]
Under dispersal mechanism $D4$, the expected number of founded colonies satisfies
\[
\mathbb E\!\left[\Delta^{(4)}(Y)\right]
=
\mathbb E\!\left[\frac{dY}{Y+d-1}\mathbf 1_{\{Y\ge1\}}\right].
\]
In particular, if $d\ge2$, then
\[
\mathbb E\!\left[\Delta^{(4)}(Y)\right]
=
d
-
d(d-1)\int_0^1 z^{d-2}G_Y(z)\,dz.
\]
\end{lemma}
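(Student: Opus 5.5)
The proof mirrors Lemma~\ref{lem:mean-D3}: first compute the conditional mean $\mathbb E[\Delta^{(4)}(r)]$ for fixed $r$, then average over the law of $Y$.

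\textbf{Conditional mean.} Fix $r\ge1$. Under the stars-and-bars model, all $\binom{r+d-1}{r}$ occupancy vectors are equally likely. By symmetry across territories,
\[
\mathbb E[\Delta^{(4)}(r)]=d\,\mathbb P(r_1\ge1)=d\bigl(1-\mathbb P(r_1=0)\bigr).
\]
The vectors with $r_1=0$ are exactly the compositions of $r$ into the remaining $d-1$ cells, and there are $\binom{r+d-2}{r}$ of them. Since
\[
\frac{\binom{r+d-2}{r}}{\binom{r+d-1}{r}}=\frac{d-1}{r+d-1},
\]
we get $\mathbb E[\Delta^{(4)}(r)]=d\bigl(1-\tfrac{d-1}{r+d-1}\bigr)=\tfrac{dr}{r+d-1}$. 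For $r=0$ we have $\Delta^{(4)}(0)=0$, which explains the indicator $\mathbf 1_{\{Y\ge1\}}$. This formula is consistent with the stated law: $\sum_k k\binom dk\binom{r-1}{k-1}$ can be evaluated by Vandermonde's identity, and this gives an independent check. Averaging over $Y$ (Tonelli, since all terms are nonnegative and bounded by $d$) yields the first identity.

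\textbf{Integral form, $d\ge2$.} Write
\[
\frac{dr}{r+d-1}=d-\frac{d(d-1)}{r+d-1}
\]
and use $\frac{1}{r+d-1}=\int_0^1 z^{r+d-2}\,dz$, which is valid because $r+d-2\ge0$. For $r=0$ this gives $d-d=0$, so the decomposition holds for every $r\ge0$ and the indicator can be dropped. Taking expectations gives
\[
\mathbb E\Bigl[\frac{1}{Y+d-1}\Bigr]=\mathbb E\int_0^1 z^{d-2}z^Y\,dz=\int_0^1 z^{d-2}G_Y(z)\,dz,
\]
where the interchange of expectation and integral is justified by Fubini--Tonelli for nonnegative integrands. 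This gives the second formula.

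The only step needing care is the counting ratio, which is the main source of possible error, together with the observation that the $r=0$ term is handled correctly without the indicator once $d\ge2$. For $d=1$ the integral representation fails because $1/(r+d-1)=1/r$ is singular at $r=0$, which is why that case is excluded from the second formula.
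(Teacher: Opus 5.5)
Your proof is correct and follows the paper's argument: you use the symmetry reduction $\mathbb E[\Delta^{(4)}(r)]=d\,\mathbb P(r_1\ge1)$ with the count ratio $\binom{r+d-2}{r}/\binom{r+d-1}{r}=\frac{d-1}{r+d-1}$, then the decomposition $\frac{dr}{r+d-1}=d-\frac{d(d-1)}{r+d-1}$ together with $\frac{1}{r+d-1}=\int_0^1 z^{r+d-2}\,dz$ and Tonelli. The only difference is minor. You observe that for $d\ge2$ this decomposition already equals $0$ at $r=0$, so you drop the indicator directly; the paper instead keeps the indicator, integrates $G_Y(z)-\mathbb P(Y=0)$, and then recombines the $\mathbb P(Y=0)$ terms, so your version is slightly cleaner.
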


\begin{proof}
Fix $r\in\mathbb N_0$. Under mechanism $D4$, all occupancy vectors
\[
(r_1,\dots,r_d)\in\mathbb N_0^d,\qquad r_1+\cdots+r_d=r,
\]
are equally likely. The number of founded colonies is precisely the number of occupied territories:
\[
\Delta^{(4)}(r)=\sum_{i=1}^d \mathbf 1_{\{r_i\ge1\}}.
\]

By symmetry,
\[
\mathbb E[\Delta^{(4)}(r)]
=
d\,\mathbb P(r_1\ge1).
\]

Now the total number of admissible occupancy vectors is $\binom{r+d-1}{r}$. Those with $r_1=0$ correspond to distributing $r$ individuals among the remaining $d-1$ territories, hence there are $\binom{r+d-2}{r}$ such vectors. Therefore,
\[
\mathbb P(r_1=0)
=
\frac{\binom{r+d-2}{r}}{\binom{r+d-1}{r}}
=
\frac{d-1}{r+d-1},
\]
and so
\[
\mathbb P(r_1\ge1)
=
1-\frac{d-1}{r+d-1}
=
\frac{r}{r+d-1}.
\]
Hence, for $r\ge1$,
\[
\mathbb E[\Delta^{(4)}(r)]
=
\frac{dr}{r+d-1},
\]
while for $r=0$ clearly $\Delta^{(4)}(0)=0$. Thus $\mathbb E[\Delta^{(4)}(r)]=\frac{dr}{r+d-1}\mathbf 1_{\{r\ge1\}}$.

Conditioning on $Y$, one has
\[
\mathbb E[\Delta^{(4)}(Y)]
=
\sum_{r\ge0}\mathbb P(Y=r)\,\mathbb E[\Delta^{(4)}(r)]
=
\mathbb E\!\left[\frac{dY}{Y+d-1}\mathbf 1_{\{Y\ge1\}}\right].
\]

Assume now $d\ge2$. For $r\ge1$, $\frac{dr}{r+d-1}=d-\frac{d(d-1)}{r+d-1}$.Therefore,
\[
\mathbb E[\Delta^{(4)}(Y)]
=
d\,\mathbb P(Y\ge1)
-
d(d-1)\sum_{r\ge1}\frac{\mathbb P(Y=r)}{r+d-1}.
\]

Using $\frac1{r+d-1}=\int_0^1 z^{r+d-2}\,dz$, Tonelli's theorem yields
\[
\sum_{r\ge1}\frac{\mathbb P(Y=r)}{r+d-1}
=
\int_0^1 z^{d-2}\sum_{r\ge1}\mathbb P(Y=r)z^r\,dz=\int_0^1 z^{d-2}\bigl(G_Y(z)-\mathbb P(Y=0)\bigr)\,dz.
\]
Substituting into the previous identity proves the result.
\end{proof}

\subsection{Colony-level branching structure}\label{sec:branching-structure}

Colonies reproduce only through catastrophes and dispersal. Let \(Z_n\) denote the number of colonies in generation \(n\).
Each colony in generation \(n\) evolves independently and produces a random number of offspring colonies.

For colony \(i\) in generation \(n\), let
\[
\xi_{i,n}
\;\overset{d}{=}\;
\Delta\!\left(N_{\eta(T)}\right)
\]
denote the number of colonies founded by that colony. The colony-count process satisfies the recursion
\[
Z_{n+1}
\;=\;
\sum_{i=1}^{Z_n} \xi_{i,n}.
\]
Conditionally on the environment, the random variables \(\{\xi_{i,n}\}_{i\ge1}\) are independent and identically distributed.

\section{Varying and Random Environments} \label{sec:environments}

The reproduction of colonies is governed by a generation--dependent environment. At generation \(n\), the environment is described by a vector
\[
\mathcal{E}_n = (\theta_n,\lambda_n,p_n,d_n),
\]
where \(\theta_n\) denotes the catastrophe rate, \(\lambda_n\) the internal growth parameter, \(p_n\) parametrizes the survivor kernel governing post–catastrophe survival, and \(d_n\) the resource or territorial constraint acting at dispersion.

The environment affects reproduction only through the induced offspring distribution. Given \(\mathcal{E}_n\), a single colony produces a random number of offspring colonies \(\xi_{i,n}\), whose law is determined by the catastrophe--dispersion mechanism under \(\mathcal{E}_n\). We define the associated offspring mean by
\[
\mu_n =\mu(\mathcal E_n)
\;:=\;
\mathbb{E}\!\left[
\Delta\!\left(N_{\eta(T)}\right)
\;\middle|\;
\mathcal{E}_n
\right].
\]

The colony--count process \((Z_n)_{n\ge0}\) is thus a branching process whose reproduction law in generation \(n\) has mean \(\mu_n\). The corresponding log--mean sequence is
\[
\ell_n := \log \mu_n,
\qquad
S_n := \sum_{k=0}^{n-1} \ell_k.
\]

The asymptotic behavior of \((S_n)_{n\ge0}\) governs extinction, survival, and finer asymptotic regimes, in accordance with the general theory of branching processes in varying and random environments.

The nature of the environmental sequence \((\mathcal{E}_n)_{n\ge0}\) determines the regime under consideration. If the sequence is deterministic, the process is said to evolve in a \emph{varying environment}. If the sequence is random, the process is said to evolve in a \emph{random environment}; unless stated otherwise, the sequence is assumed to be stationary and ergodic, with the i.i.d.\ case as a particular instance.

To emphasize the dispersal mechanism, we write $\mu_n^{(i)}=\mu^{(i)}(\mathcal E_n):=\E[\Delta^{(i)}(N_{\eta(T)})\mid\mathcal E_n]$. The same notation is used for $\ell_n^{(i)}$ and $S_n^{(i)}$. 

\begin{lemma}[Ordering of the induced offspring means]
\label{lem:ordering}
Let $\mathcal{E}= (\theta, \lambda,p,d)$ be an environment, and  let \(Y = N_{\eta(T)}\) be an \(\mathbb N_{0}\)-valued random variable representing the number of survivors after a catastrophe.
Then, for every distribution of \(Y\) and every \(d\ge 1\),
\[
\mu^{(4)}(\mathcal E) \;\le\; \mu^{(3)}(\mathcal E) \;\le\; \mu^{(2)}(\mathcal E) \;\le\; \mu^{(1)}(\mathcal E) .
\]
\end{lemma}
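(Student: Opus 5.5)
The plan is to reduce everything to a pointwise comparison of conditional means. Conditioning on \(Y=r\), for each fixed \(r\in\mathbb N_0\) we compare
\[
f_1(r)=r,\qquad f_2(r)=\min\{d,r\},\qquad f_3(r)=d\Bigl(1-\bigl(1-\tfrac1d\bigr)^r\Bigr),\qquad f_4(r)=\frac{dr}{r+d-1}\mathbf 1_{\{r\ge1\}} .
\]
The formulas for \(f_3\) and \(f_4\) are exactly the conditional expectations computed in the proofs of Lemma~\ref{lem:mean-D3} and Lemma~\ref{lem:mean-D4}. If we show \(f_4(r)\le f_3(r)\le f_2(r)\le f_1(r)\) for every \(r\), then taking expectations against the law of \(Y\) gives the claim. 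All functions are nonnegative, so Tonelli applies even when \(\mathbb E[Y]=\infty\), for instance in the Yule--Simon case, with \(+\infty\) allowed on the right.

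\textbf{The easy inequalities.} The bound \(f_2\le f_1\) is trivial. For \(f_3\le f_2\), note that \(\Delta^{(3)}(r)\) counts occupied sites among \(d\) sites. Each of the \(r\) individuals occupies at most one site, so \(\Delta^{(3)}(r)\le\min\{d,r\}\) almost surely, and the inequality follows by taking expectations.

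\textbf{The main step: \(f_4\le f_3\).} For \(r=0\) both sides vanish, and for \(d=1\) both equal \(\mathbf 1_{\{r\ge1\}}\). For \(r\ge1\) and \(d\ge2\), dividing by \(d\) and writing \(q=1-1/d\in(0,1)\), the inequality becomes
\[
q^r\le \frac{d-1}{r+d-1}=\frac{1}{1+r/(d-1)} .
\]
I would prove this via Bernoulli's inequality: \(q^{-1}=1+\frac1{d-1}\), so
\[
q^{-r}=\Bigl(1+\tfrac{1}{d-1}\Bigr)^r\ge 1+\tfrac{r}{d-1},
\]
which is exactly the required bound.

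As a sanity check, there is also a probabilistic reading: under D4 with \(d\ge2\), site \(1\) is empty with probability \(\frac{d-1}{r+d-1}\), while under D3 the probability is \((1-1/d)^r\). The Bose--Einstein (stars and bars) measure favours clumping, so D4 leaves more sites empty, and Bernoulli's inequality is simply the quantitative form of this. The only delicate points are the edge cases \(r=0\), \(d=1\), and infinite means. None of the hypotheses on \(N_m\) (stochastic monotonicity, etc.) are needed, since the comparison holds for every distribution of \(Y\).
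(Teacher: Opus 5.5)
Your proposal is correct and follows the paper's proof essentially step for step. The almost-sure bounds $\Delta^{(3)}(Y)\le\min\{d,Y\}\le Y$ give the upper part of the chain. The comparison $\mu^{(4)}\le\mu^{(3)}$ is reduced, through the conditional means of Lemmas~\ref{lem:mean-D3} and~\ref{lem:mean-D4}, to the pointwise inequality $(1-\tfrac1d)^r\le\frac{d-1}{r+d-1}$, which is Bernoulli's inequality applied to $(1+\tfrac1{d-1})^r$. Your use of Tonelli to cover the case $\mathbb E[Y]=\infty$ is a small point the paper leaves implicit.
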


\begin{proof}
Since \(\Delta^{(3)}(Y) \leq\min(d,Y)= \Delta^{(2)}(Y) \le  Y =\Delta^{(1)}(Y)\) almost surely by definition, then \(\mu^{(3)}(\mathcal E)\le\mu^{(2)}(\mathcal E)\le\mu^{(1)}(\mathcal E)\). Therefore, it suffices to verify that \(\mu^{(4)}(\mathcal E)\le\mu^{(3)}(\mathcal E)\).

By Lemmas~\ref{lem:mean-D3} and \ref{lem:mean-D4}, we have
\[
\mu^{(3)}(\mathcal E) = d\left(1 - \mathbb E\left.\left[(1-\tfrac1d)^{Y}\right| \mathcal E\right]\right),\quad \text{and} \quad
\mu^{(4)}(\mathcal E) = \mathbb E\!\left.\left[ \frac{dY}{Y+d-1}\mathbf 1_{\{Y\ge 1\}}\right| \mathcal E \right].
\]

If \(d=1\), then \(\mu^{(4)}(\mathcal E) = \mathbb P(Y\ge 1\mid\mathcal E)=\mu^{(3)}(\mathcal E)\) and the inequality holds with equality.
Assume now \(d\ge 2\) and let \(r\ge 1\). The conditional expectations satisfy
\[
\frac{dr}{r+d-1} \le d\bigl(1-(1-\tfrac1d)^{r}\bigr)
\;\Longleftrightarrow\;
\Bigl(1-\frac1d\Bigr)^{\!r} \le \frac{d-1}{r+d-1}.
\]
Fix \(n = d-1\ge 1\); the right‑hand side becomes \(\frac{n}{n+r}\) and the left‑hand side equals \(\bigl(\frac{n}{n+1}\bigr)^{r}\).
For \(r=1\) or \(0\), both sides  are equal. For \(r\ge 2\), Bernoulli's inequality applied to \((1+\frac1n)^{r}\) yields
\[
(n+1)^{r} = n^{r}\Bigl(1+\frac1n\Bigr)^{\!r}
\ge n^{r}\Bigl(1+\frac{r}{n}\Bigr)
= n^{r} + r n^{r-1}
= n^{r-1}(n+r).
\]
Dividing by \((n+1)^{r}(n+r)\) gives
\[
\Bigl(\frac{n}{n+1}\Bigr)^{\!r} \le \frac{n}{n+r},
\]
which is exactly the desired inequality. Taking expectations over \(Y\) gives \(\mu^{(4)}(\mathcal E)\le\mu^{(3)}(\mathcal E)\).
\end{proof}

\section{Classification of extinction and survival}
\label{sec:classification}

In this section we apply the classical theory of branching processes in varying and random environments to the catastrophe--dispersion model. The key novelty is that the offspring distributions are not given directly but are induced by the composition of internal growth, catastrophic survival, and dispersal.  Once the offspring means are identified, the survival--extinction
classification depends only on the associated log‑mean process.

Throughout we assume that the environment sequence $(\mathcal E_n)_{n\ge0}$ is deterministic (varying environment) or stationary and ergodic (random environment), with the i.i.d.\ case as a particular instance. The following two theorems recall the classical criteria. They are special cases of results by Jagers~\cite{jagers1974} for varying environments and by Athreya and Karlin~\cite{athreya1971} for stationary ergodic random environments; the i.i.d.\ case goes back to Smith and Wilkinson~\cite{smith_wilkinson_1969}.  A comprehensive modern treatment is given in Kersting--Vatutin~\cite{kersting_vatutin_2017}.

\begin{theorem}[Varying environment]
\label{thm:VE-classification}
Assume $(\mathcal E_n)_{n\ge0}$ is deterministic and $0<\mu_n<+\infty$ for all $n$.
Define
\[
\underline{\Lambda}=\liminf_{n\to\infty}\frac{S_n}{n},\qquad
\overline{\Lambda}=\limsup_{n\to\infty}\frac{S_n}{n}.
\]
\begin{itemize}
  \item[(a)] If $\overline{\Lambda}<0$, then $\p(Z_n\to0)=1$ (extinction).
  \item[(b)] If $\underline{\Lambda}>0$, then $\p(Z_n\to\infty)>0$ (survival with positive probability).
\end{itemize}
\end{theorem}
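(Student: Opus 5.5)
The plan is to work with the normalized process $W_n := Z_n/e^{S_n}$, with $Z_0=1$. Since the $\xi_{i,n}$ are i.i.d.\ with mean $\mu_n$ given the (here deterministic) environment, the recursion of Section~\ref{sec:branching-structure} gives $\E[Z_{n+1}\mid Z_n]=\mu_n Z_n$. Hence $(W_n)$ is a nonnegative martingale and $\E[Z_n]=e^{S_n}$.

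\emph{Part (a).} By Markov's inequality, $\p(Z_n\ge1)\le \E[Z_n]=e^{S_n}$. If $\overline\Lambda<0$, fix $c\in(0,-\overline\Lambda)$. Then $S_n\le -cn$ for all large $n$, so $\p(Z_n\ge1)\to0$. Since $0$ is absorbing, the events $\{Z_n=0\}$ increase in $n$. Therefore $\p(\exists n: Z_n=0)=\lim_n\p(Z_n=0)=1$, and integer-valuedness gives $Z_n\to0$ almost surely. This part uses only the means.

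\emph{Part (b).} Here the means alone cannot suffice, so I will use a second-moment argument on $W_n$. This needs the standard regularity hypothesis implicit in Jagers' criterion, namely
\[
\sup_n \E[\xi_{1,n}^2]/\mu_n^2<\infty .
\]
In our model it holds automatically under D2--D4 whenever the $d_n$ are bounded and the $\mu_n$ are bounded away from $0$, because $\xi_{1,n}\le d_n$. Under D1 it reduces to a second-moment bound on $N_{\eta(T)}\le \eta(T)$; for instance, Example~\ref{ex:poissonian_growth} gives geometric tails. Write $\sigma_n^2=\operatorname{Var}(\xi_{1,n})$. Conditioning on $Z_n$ gives $\operatorname{Var}(Z_{n+1}\mid Z_n)=Z_n\sigma_n^2$. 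Taking expectations and dividing by $e^{2S_{n+1}}$ yields
\[
\operatorname{Var}(W_{n+1})=\operatorname{Var}(W_n)+e^{-S_n}\,\frac{\sigma_n^2}{\mu_n^2},
\qquad\text{so}\qquad
\operatorname{Var}(W_n)\le C\sum_{k\ge0}e^{-S_k}.
\]
If $\underline\Lambda>0$, then $S_k\ge ck$ eventually for some $c>0$, so this series converges and $(W_n)$ is bounded in $L^2$. The martingale convergence theorem then gives $W_n\to W$ in $L^2$ with $\E W=1$, hence $\p(W>0)>0$. On $\{W>0\}$ we have $Z_n=W_n e^{S_n}\to\infty$ because $S_n\to\infty$. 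Thus $\p(Z_n\to\infty)\ge\p(W>0)>0$.

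The main obstacle is this moment condition in (b). The theorem as stated only assumes $0<\mu_n<\infty$, so I would either add the uniform bound on $\E[\xi^2_{1,n}]/\mu_n^2$ explicitly, citing it as part of the hypotheses in Jagers~\cite{jagers1974} and Kersting--Vatutin~\cite{kersting_vatutin_2017}, or verify it mechanism by mechanism as indicated above. For D1 with heavy-tailed Yule--Simon growth the variance can be infinite. In that case I would first try truncation: replace $\xi_{i,n}$ by $\xi_{i,n}\wedge K_n$ with $K_n$ chosen so that the truncated means still have $\liminf S_n/n>0$. The truncated process is dominated by $Z_n$ and satisfies the $L^2$ hypothesis, so the argument above applies to it and survival transfers to $Z_n$ by monotone coupling.
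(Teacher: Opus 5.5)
Your proof is correct, and the extra hypothesis you add in (b) is not a weakness of your method. The paper gives no proof of this theorem. It only cites Jagers and Kersting--Vatutin, and it omits the uniformity and moment conditions under which those results hold, so (b) is false as literally stated.

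To see this, a union bound gives $\p(Z_{n+1}>0)\le \E[Z_n]\,\p(\xi_{1,n}>0)=e^{S_n}\,\p(\xi_{1,n}>0)$. Hence any environment with $\mu_n=e^2$ and $\p(\xi_{1,n}>0)\le e^{-3n}$ has $\underline\Lambda=2$ and still dies out almost surely. This happens inside the model itself. Take Poissonian growth, geometric catastrophes $N_m=(m-G)^+$ with $G\sim\mathrm{Geom}(p_n)$ on $\{1,2,\dots\}$, and D1, and set $q_n=\lambda_n/(\lambda_n+\theta_n)$. Then $\p(\xi_{1,n}>0)=p_nq_n/(1-(1-p_n)q_n)$ and $\mu_n=\p(\xi_{1,n}>0)/(1-q_n)$. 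Both quantities can be prescribed by letting $q_n\to1$ and $p_n\to0$ at suitable rates.

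Your part (a) is complete as written, since it uses only first moments. Your variance recursion and $L^2$-martingale argument for (b) are sound. They actually need only $\sum_k e^{-S_k}\sigma_k^2/\mu_k^2<\infty$, which is the natural hypothesis to add. Compared with the paper's bare citation, your route is elementary and makes the required hypothesis explicit. The cited theory, once its hypotheses are restored, gives sharper conclusions, such as exact extinction criteria.

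Two of your side remarks need tightening.

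\emph{Geometric tails under D1.} Geometric tails in each generation do not give your uniform bound. In the example above every $\xi_{1,n}$ has geometric tails, yet $\E[\xi_{1,n}^2]/\mu_n^2=(1+q_n)/\p(\xi_{1,n}>0)\to\infty$. In the survival parts of Theorems~\ref{thm:VE-poisson-binomial} and~\ref{thm:VE-interval-bounds} the bound does hold, because $\lambda_n/\theta_n$ stays bounded and $\mu_n$ stays bounded away from $0$. Under D2--D4 you do not even need the lower bound on $\mu_n$. From $\sigma_k^2\le d_k\mu_k$ you get $e^{-S_k}\sigma_k^2/\mu_k^2\le d_k e^{-S_{k+1}}$, which is summable whenever $\underline\Lambda>0$ and $\log d_k=o(k)$.

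\emph{Truncation.} Truncation cannot rescue (b) in general. In the example, the truncated process is dominated by $Z_n$, which dies out, so no admissible $K_n$ exists. Truncation does work under uniform tail control. This is exactly what Example~\ref{ex:VE-catastrophe} needs when $\varepsilon<\lambda$, since the offspring variance is then eventually infinite and your untruncated argument fails. There every $\eta(T)$ is stochastically dominated by the Yule--Simon law with parameter $1+\varepsilon/\lambda>1$, which has finite mean. So a single fixed level $K$ keeps the truncated log-means eventually bounded below by a positive constant.
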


\begin{theorem}[Random environment]
\label{thm:RE-classification}
Let $(\mathcal E_n)_{n\ge0}$ be a stationary ergodic sequence.
Assume that
\[
\E[|\ell_0|]<\infty \qquad\text{and}\qquad
\E\!\Big[\bigl|\log\bigl(\p(\xi_{1,0}>0\mid\mathcal E_0)\bigr)\bigr|\Big]<+\infty,
\]
where $\xi_{1,0}$ denotes the number of offspring colonies of a single
colony given the environment $\mathcal E_0$ (see
Section~\ref{sec:branching-structure}).
Set $\Lambda=\E[\ell_0]$. Then:
\begin{itemize}
  \item[(a)] If $\Lambda>0$, survival has positive probability.
  \item[(b)] If $\Lambda<0$, extinction is almost sure.
  \item[(c)] If $\Lambda=0$ and the offspring law is non‑degenerate,
        extinction is almost sure.
\end{itemize}
\end{theorem}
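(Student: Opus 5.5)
The statement is a specialisation of the classical random-environment criterion, so the plan is to place the colony-count process $(Z_n)$ inside the framework of Athreya--Karlin and Smith--Wilkinson, and then quote their results. Conditionally on the environment, $Z_{n+1}=\sum_{i=1}^{Z_n}\xi_{i,n}$. The $\xi_{i,n}$ are i.i.d.\ with law $Q(\mathcal E_n):=\mathcal L(\Delta(N_{\eta(T)})\mid\mathcal E_n)$, as recorded in Section~\ref{sec:branching-structure}.

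\textbf{Setting up the random environment.} First I check that $\mathcal E\mapsto Q(\mathcal E)$ is a measurable map into the space of probability laws on $\mathbb N_0$. The law is an explicit composition of kernels: the law of $\eta(T)$, then the survivor kernel $N_m$, then the dispersal kernel $\Delta^{(i)}$ from (D1)--(D4). Each of these depends measurably on $(\theta,\lambda,p,d)$, for instance through the formulas in Lemmas~\ref{lem:mean-D3} and~\ref{lem:mean-D4}. Consequently $(Q(\mathcal E_n))_{n\ge0}$ is stationary and ergodic whenever $(\mathcal E_n)$ is, being a measurable factor of it. So $(Z_n)$ is a branching process in a stationary ergodic random environment in the sense of Athreya--Karlin.

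\textbf{Matching the hypotheses.} The first integrability assumption $\E|\ell_0|<\infty$ is exactly $\E|\log m(Q_0)|<\infty$, where $m(Q_0)$ is the mean of $Q_0$. The second assumption, $\E|\log \p(\xi_{1,0}>0\mid\mathcal E_0)|<\infty$, is their condition $\E|\log(1-Q_0(\{0\}))|<\infty$. Together these guarantee that $\ell_0$ takes values in $(-\infty,\infty)$ almost surely, so $0<\mu_0<\infty$ a.s.

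\textbf{Applying the classical results.} With the hypotheses matched:
\begin{itemize}
\item Part (b) follows from the a.s.\ extinction result when $\E\log m<0$.
\item Part (a) follows from the survival result when $\E\log m>0$ under the stated integrability; this is where the second condition is used.
\item Part (c) follows from the critical case $\E\log m=0$ under non-degeneracy, understood as $\p(Q_0(\{1\})<1)>0$. It is worth noting that an environment with $\xi\equiv1$ a.s.\ is possible, for example with $d=1$ and $N_m\ge1$, so the non-degeneracy assumption is genuinely needed.
\end{itemize}

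\textbf{Main obstacle.} The hardest step is making sure part (a) is correctly attributed. In the stationary ergodic setting, Athreya--Karlin prove survival under $\E\log m>0$ together with $\E|\log(1-Q_0(0))|<\infty$. Their argument uses the lower bound $\p(Z_n>0\mid\mathcal E)\ge \bigl(\sum_{k<n} \frac{1}{\mu_0\cdots\mu_{k-1}}\,\frac{f_k''(1)}{\mu_k}\bigr)^{-1}$-type estimates, or alternatively a truncation argument. I would cite Kersting--Vatutin for the i.i.d.\ version and check that no second-moment condition on $\xi$ is required; in particular this matters because Yule--Simon growth can have infinite variance. If such a condition does appear, the remedy is truncation. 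Replace $\xi$ by $\min(\xi,M)$: the truncated process is dominated by $Z_n$, it retains $\E\log m_M>0$ for large $M$ by monotone convergence, and it has bounded offspring, so its survival gives survival of $Z_n$. The bound $\Delta\le d$ under (D2)--(D4) makes this automatic in those cases.
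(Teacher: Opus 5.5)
Your proposal is correct and follows the same route as the paper. The paper gives no independent argument: it treats the colony-count process as a branching process in a stationary ergodic environment and invokes Athreya--Karlin (Theorems~1 and~3), Smith--Wilkinson for the i.i.d.\ case, and Kersting--Vatutin. Your measurability check, the identification of $\p(\xi_{1,0}>0\mid\mathcal E_0)$ with $1-Q_0(\{0\})$, and the optional truncation fallback only make that citation more explicit; in the truncation, the integrable lower bound $\log m_M\ge\log\p(\xi_{1,0}>0\mid\mathcal E_0)$ is what justifies your monotone-convergence step.
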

\begin{remark}
For i.i.d.\ environments the same criteria hold without stating the extra condition on $\p(\xi_{1,0}>0\mid\mathcal E_0)$; see \cite{smith_wilkinson_1969} and \cite{kersting_vatutin_2017}. The extension to stationary ergodic environments is due to Athreya and Karlin~\cite{athreya1971} (Theorems~1 and~3), whose formulation we follow here.
\end{remark}

The remainder of this section translates the abstract criteria into explicit, computable thresholds for two canonical choices of internal growth. First, in Section \ref{sec:poisson-binomial-explicit} we work with Poissonian growth. Because the offspring means can be expressed in closed form for every dispersion mechanism, the ordering established in Lemma~\ref{lem:ordering} yields
inequalities that govern extinction or survival \emph{uniformly} over all four dispersal mechanisms D1--D4.  The results of this subsection are therefore structural: they show how the interplay of growth, catastrophe, and dispersal determines the fate of the population in a way that is largely insensitive to the precise dispersion rule.

In Section \ref{sec:yule-examples} we turn to Yule--Simon growth. Here explicit formulas for $\mu^{(2)}$--$\mu^{(4)}$ are more involved, and we concentrate on full dispersal (D1) to illustrate a range of ecologically motivated scenarios: decreasing catastrophe intensity, cumulative fertility loss, correlated random catastrophes, and adaptive survival. These examples highlight the versatility of the framework and demonstrate how the log‑mean criterion can be applied in concrete settings of biological interest.

\subsection{Thresholds for models of Poissonian growth with binomial survival}
\label{sec:poisson-binomial-explicit}

Throughout this subsection we work with the following model: \begin{example}[Poissonian growth with binomial catastrophe]
\label{ex:poisson-binomial}
Let the internal growth be $\eta(t)=1+X(t)$, where $\big(X(t)\big)_{t\ge0}$ is a
Poisson process of rate $\lambda>0$.  
The catastrophe time is $T\sim\mathrm{Exp}(\theta)$, $\theta>0$, independent of
$\eta$ (see Example~\ref{ex:poissonian_growth}).  Conditionally on $\eta(T)=m$, each individual survives independently
with probability $p\in(0,1]$, so that
\[
Y := N_{\eta(T)}\sim\mathrm{Bin}(m,p),\qquad m\ge1.
\]
The dispersion mechanism is one of D1--D4, with site parameter $d\in\mathbb N$. Set $\mathcal{E} = (\theta,\lambda,p, d)$ to be the environment. For this model the probability generating function of $Y$ is
\[
G_{Y\mid\mathcal{E}}(z)=\frac{(1-q)(1-p+pz)}{1-q(1-p+pz)},
\qquad q:=\frac{\lambda}{\theta+\lambda},\;0\le z\le1,
\]
and the offspring means are
\[
\mu^{(1)}(\mathcal{E}) = p\Bigl(1+\frac{\lambda}{\theta}\Bigr),\qquad
\mu^{(2)}(\mathcal{E}) = \E[\min\{d,Y\}\mid \mathcal{E}],\qquad
\mu^{(3)}(\mathcal{E}) = d\Bigl(1-G_{Y\mid\mathcal{E}}\!\bigl(1-\tfrac1d\bigr)\Bigr),
\]
\[
\mu^{(4)}(\mathcal{E}) = \E\!\left.\left[\frac{dY}{Y+d-1}\mathbf 1_{\{Y\ge1\}}\right|\mathcal{E}\right]
        = d - d(d-1)\!\int_0^1 \frac{z^{d-2}(1-p+pz)}{1+\frac{\lambda p}{\theta}
          -\frac{\lambda p}{\theta}z}\,dz .
\]
When $d=1$, $\mu^{(4)}=\mu^{(3)}=\mu^{(2)}$.
\end{example}

\begin{theorem}[Varying environment: convergence of parameters]
\label{thm:VE-poisson-binomial}
Consider the Poissonian growth with binomial survival of
Example~\ref{ex:poisson-binomial}. Let $(\mathcal E_n)_{n\ge0}$ be a varying environment sequence with
components $(\theta_n,\lambda_n,p_n,d_n)$. Assume
\[
\lim_{n\to\infty}(\theta_n,\lambda_n,p_n)=(\theta,\lambda,p)\in(0,\infty)^3,
\qquad \theta>\lambda,\ p\le1,
\]
and that $d_n\to+\infty$.  Then, for every dispersion mechanism $\Delta^{(j)}$,
$j=1,2,3,4$,
\begin{itemize}
  \item if $p>\dfrac{\theta}{\theta+\lambda}$, the process is supercritical and survives with positive probability;
  \item if $p<\dfrac{\theta}{\theta+\lambda}$, the process is subcritical and becomes extinct almost surely.
\end{itemize}
\end{theorem}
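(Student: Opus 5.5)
The plan is to show that, under the stated hypotheses, every induced offspring mean converges to the same limit
\[
\mu^{(j)}(\mathcal E_n)\longrightarrow \mu^\ast:=p\Bigl(1+\frac{\lambda}{\theta}\Bigr)=\frac{p(\theta+\lambda)}{\theta},\qquad j=1,2,3,4.
\]
Once this is established, $\ell_n^{(j)}\to\log\mu^\ast$. By Cesàro averaging, $S_n^{(j)}/n\to\log\mu^\ast$, so $\underline\Lambda=\overline\Lambda=\log\mu^\ast$. Theorem~\ref{thm:VE-classification} then gives survival with positive probability if $\mu^\ast>1$ and almost sure extinction if $\mu^\ast<1$. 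Finally, $\mu^\ast>1$ is equivalent to $p>\theta/(\theta+\lambda)$, and similarly for the reverse inequality, which yields both bullet points.

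The first step is the upper bound and the positivity check. By Lemma~\ref{lem:ordering}, $\mu^{(j)}(\mathcal E_n)\le\mu^{(1)}(\mathcal E_n)=p_n(1+\lambda_n/\theta_n)$, and the right-hand side tends to $\mu^\ast$ by continuity. For the standing hypothesis $0<\mu_n<\infty$ of Theorem~\ref{thm:VE-classification}, finiteness follows from this same bound. Positivity holds because $\mu^{(4)}\ge d\,\mathbb P(Y\ge1)/(\text{something finite})>0$ as soon as $p_n>0$, since $\mathbb P(Y\ge1)\ge p_n>0$. If some early $p_n$ vanish, I would discard finitely many generations, which does not affect $S_n/n$; here I assume $p_n\in(0,1]$ throughout, as in Example~\ref{ex:poisson-binomial}.

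The key step is a matching lower bound for the smallest mean $\mu^{(4)}$, which by Lemma~\ref{lem:ordering} then controls all $j$. From Lemma~\ref{lem:mean-D4}, for $r\ge1$ we have
\[
r-\frac{dr}{r+d-1}=\frac{r(r-1)}{r+d-1}\le\frac{r^2}{d},
\]
and therefore
\[
\mu^{(1)}(\mathcal E_n)-\frac{\mathbb E[Y^2\mid\mathcal E_n]}{d_n}\le\mu^{(4)}(\mathcal E_n)\le\mu^{(1)}(\mathcal E_n).
\]
It remains to bound $\mathbb E[Y^2\mid\mathcal E_n]$ uniformly in $n$. Since $Y\le\eta(T)$ and $\eta(T)-1$ is geometric with success parameter $\theta_n/(\theta_n+\lambda_n)\to\theta/(\theta+\lambda)>0$, the second moment $\mathbb E[\eta(T)^2]$ is an explicit polynomial in $\lambda_n/\theta_n$. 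This ratio converges, so the second moments are bounded by some constant $C$ for all $n$. Alternatively, one can differentiate the pgf $G_{Y\mid\mathcal E}$ twice at $z=1$ to get the same conclusion. Combining this with $d_n\to\infty$ gives $\mu^{(4)}(\mathcal E_n)\to\mu^\ast$. The ordering in Lemma~\ref{lem:ordering} then squeezes $\mu^{(2)}$ and $\mu^{(3)}$ to the same limit.

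The main (mild) obstacle is this uniform second-moment control together with the elementary inequality above. Once they are in place, the rest is continuity plus Cesàro. The hypothesis $\theta>\lambda$ is used only to keep everything in a comfortably bounded regime, since the geometric law already has all moments.
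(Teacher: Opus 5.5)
Your proof is correct and has the same skeleton as the paper's. You get $\mu_n^{(1)}\to\mu^\ast$ by continuity and $\mu_n^{(4)}\to\mu^\ast$ because $d_n\to\infty$. Lemma~\ref{lem:ordering} then squeezes $\mu_n^{(2)},\mu_n^{(3)}$, and Ces\`aro averaging plus Theorem~\ref{thm:VE-classification} finish.

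The real difference is how you prove $\mu_n^{(4)}\to\mu^\ast$. The paper argues qualitatively. For fixed $r$, the chance that two of $r$ individuals share one of $d_n$ territories vanishes, and uniform integrability of $Y_n$ (domination by geometric variables) lets one pass to expectations. You instead read off from Lemma~\ref{lem:mean-D4} the exact deficit
\[
0\le\mu^{(1)}(\mathcal E_n)-\mu^{(4)}(\mathcal E_n)=\mathbb E\Bigl[\frac{Y(Y-1)}{Y+d_n-1}\mathbf 1_{\{Y\ge1\}}\Bigm|\mathcal E_n\Bigr]\le\frac{\mathbb E[Y^2\mid\mathcal E_n]}{d_n},
\]
and bound the numerator by $\mathbb E[\eta(T)^2]=(1+\lambda_n/\theta_n)(1+2\lambda_n/\theta_n)$.

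Your route buys three things. It gives an explicit $O(1/d_n)$ rate. It sidesteps details the paper's sketch leaves implicit: the stated ``almost sure'' convergence needs a coupling across $n$, and $\mathbb E[Y_n]$ equals $\mu_n^{(1)}$, not $\mu$. It also makes visible that $\theta>\lambda$ is not needed for Poissonian growth.

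You also check the hypothesis $\mu_n>0$ of Theorem~\ref{thm:VE-classification}, which the paper never mentions. The cleanest form is that $\Delta^{(j)}(r)\ge1$ for $r\ge1$, so $\mu_n^{(j)}\ge\mathbb P(Y\ge1\mid\mathcal E_n)\ge p_n>0$.

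The paper's route, in exchange, needs only first-moment uniform integrability. It would therefore carry over to heavier-tailed growth such as Yule--Simon with $1<\theta/\lambda\le2$, where $\mathbb E[\eta(T)^2]=\infty$ and your bound is vacuous. Your method can be rescued there via $\frac{r(r-1)}{r+d-1}\le\min\{r,r^2/d\}$ together with uniform integrability.

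One small slip: discarding early generations with $p_n=0$ is not harmless, because such a generation kills the process surely. This is moot only because Example~\ref{ex:poisson-binomial} takes $p\in(0,1]$.
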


\begin{proof}
For the full dispersal mechanism (D1),
\[
\mu_n^{(1)} = p_n\Bigl(1+\frac{\lambda_n}{\theta_n}\Bigr),
\]
which by continuity converges to $\mu:=p(1+\lambda/\theta)$ as $n\to\infty$. Hence, the Cesàro means are so that $\frac1n S_n^{(1)}\to\log\mu$.

Now consider mechanism D4.  For any fixed integer $r\ge0$, the definition of $\Delta^{(4)}$ implies that, conditionally on $Y_n=r$, $\Delta^{(4)}(Y_n)$ is the number of occupied territories among $d_n$ when $r$ individuals are uniformly placed.  As $d_n\to\infty$, the probability that two individuals share a territory tends to $0$, so $\Delta^{(4)}(Y_n)\to r$ almost surely.
Moreover $\Delta^{(4)}(Y_n)\le Y_n$ almost surely.  Since $Y_n$ can be coupled with a geometric random variable whose mean is uniformly bounded (because $\theta_n$ eventually exceeds $\lambda_n+\varepsilon$ for some $\varepsilon>0$),
the family $\{Y_n\}$ is uniformly integrable, and consequently $\{\Delta^{(4)}(Y_n)\}$ is uniformly integrable as well.  Thus
\[
\mu_n^{(4)} = \E[\Delta^{(4)}(Y_n)] \to \E[Y_n] = \mu .
\]
Therefore $\frac1n S_n^{(4)}\to\log\mu$. By Lemma~\ref{lem:ordering}, for every $n$ and $j=1,2,3,4$,
\[
\mu_n^{(4)}\le\mu_n^{(j)}\le\mu_n^{(1)} .
\]
Taking logarithms and averaging gives
\[
\frac{S_n^{(4)}}{n} \le \frac{S_n^{(j)}}{n} \le \frac{S_n^{(1)}}{n}.
\]
Since the lower and upper bounds both converge to $\log\mu$, the squeeze theorem yields $\frac1n S_n^{(j)}\to\log\mu$ for all $j$.

Now apply Theorem~\ref{thm:VE-classification}.  The process is supercritical (survival with positive probability) precisely when $\log\mu>0$, i.e. $\mu=p(1+\lambda/\theta)>1$, which is equivalent to $p>\theta/(\theta+\lambda)$.  It is subcritical (extinction almost surely) when $\log\mu<0$, i.e. $p<\theta/(\theta+\lambda)$.
\end{proof}

\begin{theorem}[Varying environment: interval bounds]
\label{thm:VE-interval-bounds} Consider the Poissonian growth with binomial survival of Example~\ref{ex:poisson-binomial}. Let $(\mathcal E_n)_{n\ge0}$ be a deterministic environment sequence whose components satisfy, for all sufficiently large $n$,
\[
\theta_n\in[\theta_-,\theta_+],\quad
\lambda_n\in[\lambda_-,\lambda_+],\quad
p_n\in[p_-,p_+],\quad
d_n\in[d_-,d_+],
\]
with $0<\lambda_-\le\lambda_+<\theta_-\le\theta_+<+\infty$, $0<p_-\le p_+\le1$, and integers $1\le d_-\le d_+$. Then, for every dispersion mechanism $\Delta^{(j)}$, $j=1,2,3,4$:
\begin{itemize}
  \item if
  $\displaystyle p_+<\frac{\theta_-}{\theta_-+\lambda_+}$,
  the process becomes extinct almost surely;
  \item if
  \[
    \frac{d_-}{d_--1}\,\frac{1-p_-}{p_-}\,
    {}_2F_1\!\Bigl(1,1;d_-;-\frac{\lambda_-p_-}{\theta_+}\Bigr) + {}_2F_1\!\Bigl(1,1;d_-+1;-\frac{\lambda_-p_-}{\theta_+}\Bigr)< \frac{1}{p_-},
  \]
  where ${}_2F_1$ is the hypergeometric function, then the process survives with positive probability.
\end{itemize}
\end{theorem}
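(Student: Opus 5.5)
The plan is to sandwich every $\mu_n^{(j)}$ between two constants that do not depend on $n$, using Lemma~\ref{lem:ordering}, and then to apply Theorem~\ref{thm:VE-classification}. Finitely many initial generations, in which the parameters may lie outside the given boxes, do not affect $\liminf S_n/n$ or $\limsup S_n/n$. So it suffices to bound $\ell_n^{(j)}$ uniformly for all large $n$.

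\emph{Extinction.} By Lemma~\ref{lem:ordering}, $\mu_n^{(j)}\le \mu_n^{(1)}=p_n(1+\lambda_n/\theta_n)$. Since this expression is increasing in $p$ and $\lambda$ and decreasing in $\theta$, it is at most $p_+(1+\lambda_+/\theta_-)=:\mu^*$. The hypothesis $p_+<\theta_-/(\theta_-+\lambda_+)$ is exactly $\mu^*<1$. Hence $\overline{\Lambda}\le\log\mu^*<0$, and part (a) of Theorem~\ref{thm:VE-classification} gives extinction almost surely. We also need $\mu_n^{(j)}>0$, which holds because $p_n>0$ forces $\mathbb P(Y_n\ge1)>0$.

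\emph{Survival.} Lemma~\ref{lem:ordering} gives $\mu_n^{(j)}\ge\mu_n^{(4)}$, so it is enough to bound $\mu^{(4)}$ below by a constant exceeding $1$. I will carry this out in three steps.

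\textbf{Step 1: closed form for $\mu^{(4)}$.} Put $a=\lambda p/\theta$. Starting from the integral in Example~\ref{ex:poisson-binomial}, I will split $1-p+pz$ into its two terms. Substituting $t=1-z$ in Euler's representation ${}_2F_1(1,1;c;x)=(c-1)\int_0^1(1-t)^{c-2}(1-xt)^{-1}\,dt$ with $x=-a$ should give
\[
\int_0^1\frac{z^{d-2}}{1+a(1-z)}dz=\frac{{}_2F_1(1,1;d;-a)}{d-1},\qquad
\int_0^1\frac{z^{d-1}}{1+a(1-z)}dz=\frac{{}_2F_1(1,1;d+1;-a)}{d}.
\]
This yields
\[
\mu^{(4)}=d-d(1-p)\,{}_2F_1(1,1;d;-a)-(d-1)p\,{}_2F_1(1,1;d+1;-a).
\]
Multiplying the stated survival condition by $p(d-1)$ shows it is exactly the statement $\mu^{(4)}>1$ at the corner parameters $(d_-,p_-,\lambda_-,\theta_+)$. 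This step requires $d_-\ge2$, which the condition implicitly presupposes since it contains the factor $d_-/(d_--1)$. For $d=1$ one would instead use $\mu^{(4)}=\mathbb P(Y\ge1)<1$, so the condition cannot hold in that case anyway.

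\textbf{Step 2: monotonicity.} I will show that $\mu^{(4)}$ is nondecreasing in $d$, $p$ and $\lambda$, and nonincreasing in $\theta$. Write $\mu^{(4)}=\mathbb E[f_d(Y)]$ with $f_d(r)=dr/(r+d-1)$ for $r\ge1$ and $f_d(0)=0$.
\begin{itemize}
\item For fixed $r\ge 1$, $f_d(r)$ is nondecreasing in $d$, since $\partial_d f_d(r)=r(r-1)/(r+d-1)^2\ge0$.
\item $f_d$ is nondecreasing in $r$.
\item $Y=\mathrm{Bin}(\eta(T),p)$, where $\eta(T)\sim 1+\mathrm{Geom}(\theta/(\theta+\lambda))$ is stochastically increasing in $\lambda/\theta$. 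Binomial thinning is stochastically increasing both in $p$ and in the number of trials.
\end{itemize}
Coupling arguments then give the stochastic ordering of $Y$, and with it the monotonicity of $\mathbb E[f_d(Y)]$. Consequently, for all large $n$, $\mu_n^{(4)}\ge\mu_*:=\mu^{(4)}(\theta_+,\lambda_-,p_-,d_-)>1$.

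\textbf{Step 3: conclusion.} From Step 2, $\underline{\Lambda}\ge\log\mu_*>0$, and part (b) of Theorem~\ref{thm:VE-classification} gives survival with positive probability, uniformly in $j$.

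I expect the main obstacle to be Step 1: making sure the hypergeometric parameters and prefactors line up exactly with the stated condition. The stochastic-ordering claims in Step 2 are routine but must be checked jointly in all four parameters.
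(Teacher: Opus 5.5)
Your proposal is correct and follows essentially the same route as the paper. Both use the sandwich $\mu_n^{(4)}\le\mu_n^{(j)}\le\mu_n^{(1)}$ from Lemma~\ref{lem:ordering} and the uniform bound $\mu_n^{(1)}\le p_+(1+\lambda_+/\theta_-)<1$ for extinction; for survival, both use the hypergeometric closed form of $\mu^{(4)}$ and its monotonicity in $(\theta,\lambda,p,d)$, so that the stated condition becomes exactly $\mu^{(4)}(\theta_+,\lambda_-,p_-,d_-)>1$. Your explicit Euler substitution and the computation $\partial_d f_d(r)=r(r-1)/(r+d-1)^2\ge0$ fill in steps the paper only asserts; the one slip is that for $d=1$ one has $\mu^{(4)}=\mathbb P(Y\ge1)\le1$ rather than $<1$ (equality when $p=1$), which affects nothing.
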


\begin{proof}
Consider the Poissonian‑binomial model of Example~\ref{ex:poisson-binomial}. For a given environment $\mathcal{E}=(\theta,\lambda,p,d)$, the offspring means are $\mu^{(1)}(\mathcal{E})=p(1+\lambda/\theta)$ and, by Lemma~\ref{lem:mean-D4} together with the hypergeometric representation derived in
Example~\ref{ex:poisson-binomial},
\[
\mu^{(4)}(\mathcal{E})=d-d(1-p)\,{}_2F_1\!\Bigl(1,1;d;-\frac{\lambda p}{\theta}\Bigr)
           -p(d-1)\,{}_2F_1\!\Bigl(1,1;d+1;-\frac{\lambda p}{\theta}\Bigr).
\]

Both $\mu^{(1)}(\mathcal{E})$ and $\mu^{(4)}(\mathcal{E})$ are monotone in the environment parameters:
$\mu^{(1)}(\mathcal{E})$ is increasing in $p$ and $\lambda$ and decreasing in $\theta$; $\mu^{(4)}(\mathcal{E})$ is increasing in $p,\lambda,d$ and decreasing in $\theta$, which follows from stochastic domination of the survivor variable $Y$ and the monotonicity of $\Delta^{(4)}$. Consequently, for all $n$ large enough,
\[
\mu_n^{(1)}\le \mu^{(1)}(\theta_-,\lambda_+,p_+)
           =p_+\Bigl(1+\frac{\lambda_+}{\theta_-}\Bigr)=:M_1,
\]
\[
\mu_n^{(4)}\ge \mu^{(4)}(\theta_+,\lambda_-,p_-,d_-)=:M_4 .
\]

Lemma~\ref{lem:ordering} yields
$\mu_n^{(4)}\le\mu_n^{(j)}\le\mu_n^{(1)}$
for $j=2,3$, and therefore
\[
\frac{S_n^{(4)}}{n}\le\frac{S_n^{(j)}}{n}\le\frac{S_n^{(1)}}{n}.
\]

The condition $p_+<\theta_-/(\theta_-+\lambda_+)$ is equivalent to $M_1<1$. Thus, for large $n$, $\mu_n^{(1)}\le M_1<1$, whence $\ell_n^{(1)}\le\log M_1<0$ and $\limsup_n S_n^{(1)}/n\le\log M_1<0$. The upper bound on $S_n^{(j)}/n$ implies $\limsup_n S_n^{(j)}/n<0$ for every $j$, and Theorem~\ref{thm:VE-classification} forces almost sure extinction.

A direct algebraic manipulation shows that the inequality
\[
\frac{1}{p_-}-{}_2F_1\!\Bigl(1,1;d_-+1;-\frac{\lambda_-p_-}{\theta_+}\Bigr)
 >\frac{d_-}{d_--1}\,\frac{1-p_-}{p_-}\,
   {}_2F_1\!\Bigl(1,1;d_-;-\frac{\lambda_-p_-}{\theta_+}\Bigr)
\]
is exactly equivalent to $M_4>1$. Hence, for all large $n$, $\mu_n^{(4)}\ge M_4>1$, so $\ell_n^{(4)}\ge\log M_4>0$ and
$\liminf_n S_n^{(4)}/n\ge\log M_4>0$. Since $S_n^{(j)}\ge S_n^{(4)}$, every $\liminf_n S_n^{(j)}/n$ is also strictly positive, and Theorem~\ref{thm:VE-classification} guarantees survival with positive probability for all dispersal mechanisms.
\end{proof}

\begin{corollary}[Sufficient conditions for universal survival]
\label{cor:VE-survival-sufficient}
Under the hypotheses of Theorem~\ref{thm:VE-interval-bounds}, if
\[
d_->\frac1{p_-}\qquad\text{and}\qquad
\frac{\lambda_-}{\theta_+}>\frac1{(d_--1)p_-},
\]
then the catastrophe–dispersion process survives with positive probability for every dispersion mechanism \(\Delta^{(j)}\), \(j=1,2,3,4\).
\end{corollary}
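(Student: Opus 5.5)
The plan is to reduce the corollary to the survival criterion of Theorem~\ref{thm:VE-interval-bounds}. Its proof shows that this criterion is equivalent to $M_4:=\mu^{(4)}(\theta_+,\lambda_-,p_-,d_-)>1$. So it suffices to prove that, for the single extremal environment $\mathcal E_*=(\theta_+,\lambda_-,p_-,d_-)$, the two hypotheses force $\mu^{(4)}(\mathcal E_*)>1$. Once this is done, the monotonicity of $\mu^{(4)}$ in the parameters gives $\mu_n^{(4)}\ge M_4>1$ for all large $n$. Lemma~\ref{lem:ordering} then gives $S^{(j)}_n\ge S^{(4)}_n$, and Theorem~\ref{thm:VE-classification}(b) yields survival for every $j$, exactly as in the proof of Theorem~\ref{thm:VE-interval-bounds}.

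To show $M_4>1$, I write $d=d_-\ge 2$ (which follows from $d_->1/p_-\ge1$), $p=p_-$, $x=\lambda_-p_-/\theta_+$, and let $Y$ be the survivor variable under $\mathcal E_*$. By Lemma~\ref{lem:mean-D4}, and noting that the term $Y=0$ contributes $0$ consistently, we have
\[
\mu^{(4)}(\mathcal E_*)=d\Bigl(1-(d-1)\,\E\bigl[\tfrac{1}{Y+d-1}\bigr]\Bigr).
\]
So $M_4>1$ is equivalent to
\[
\E\bigl[\tfrac1{Y+d-1}\bigr]<\tfrac1d .
\]
The second hypothesis reads $x>1/(d-1)$. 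The first, $pd>1$, controls the atom of $Y$ at $0$.

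To bound this expectation I use the explicit structure of $Y$. From the pgf in Example~\ref{ex:poisson-binomial}, $Y\overset{d}{=}B+W$, where $B\sim\mathrm{Bernoulli}(p)$ is the survival of the founder and $W$ is an independent geometric variable on $\{0,1,\dots\}$ with mean $x$ (thinning a geometric by $p$ stays geometric). Since $y\mapsto 1/(y+d-1)$ is convex, I will not use Jensen's inequality, which points the wrong way. Instead I condition on $B$ and compute
\[
\E\bigl[\tfrac1{W+c}\bigr]=\int_0^1 z^{c-1}G_W(z)\,dz,\qquad G_W(z)=\frac{1}{1+x-xz}.
\]
Then I bound $G_W(z)\le \frac{1}{1+x(1-z)}$ crudely, for instance by $\frac{1}{1+x}$ on part of $[0,1]$ and by $1$ near $z=1$. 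With $c=d-1$ and $c=d$, this gives explicit upper bounds of the form
\[
\E\bigl[\tfrac1{Y+d-1}\bigr]\le (1-p)\,\E\bigl[\tfrac{1}{W+d-1}\bigr]+p\,\E\bigl[\tfrac{1}{W+d}\bigr]
\]
in terms of $d,p,x$. I then check that $x>1/(d-1)$ and $pd>1$ make this bound smaller than $1/d$.

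The main obstacle is this last elementary inequality: the crude bounds on the integral must be sharp enough that the two simple hypotheses suffice. If the splitting of $[0,1]$ is too lossy, I will fall back on the exact closed forms $(1-p)\,{}_2F_1(1,1;d;-x)/(d-1)$ and $p\,{}_2F_1(1,1;d+1;-x)/d$. For these I will use a Pfaff transformation and the series bound
\[
{}_2F_1(1,1;c;-x)=\tfrac{1}{1+x}\,{}_2F_1\!\bigl(1,c-1;c;\tfrac{x}{1+x}\bigr).
\]
This makes the dependence on $x$ monotone and allows comparison with $x(d-1)>1$ directly.
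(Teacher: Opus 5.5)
Your reduction is sound as far as it goes. By the monotonicity and ordering arguments of Theorem~\ref{thm:VE-interval-bounds}, survival for all four mechanisms follows once $M_4=\mu^{(4)}(\theta_+,\lambda_-,p_-,d_-)>1$. With $d=d_-\ge2$ this is equivalent to $\E[1/(Y+d-1)]<1/d$, and your decomposition $Y\overset{d}{=}B+W$, with $W$ geometric of mean $x=\lambda_-p_-/\theta_+$, is correct.

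The gap is the final ``elementary inequality'' that you flag yourself. It is false under the two hypotheses, so no splitting of $[0,1]$ and no hypergeometric estimate can produce it. (Incidentally, $G_W(z)=1/(1+x(1-z))\ge 1/(1+x)$ on $[0,1]$, so bounding $G_W$ above by $1/(1+x)$ on a subinterval is not valid anyway.) Your remark about Jensen's inequality is the key. By convexity, $\E[1/(Y+d-1)]\ge 1/(p+x+d-1)$, so $M_4>1$ forces $\mu^{(1)}=p+x>1$, and the hypotheses do not imply this. Together with $\lambda_-<\theta_+$ (hence $x<p_-$) they only give $p_->x>1/(d_--1)$, so $p_-+x$ can be close to $2/(d_--1)$.

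For instance, take a constant environment with $d=4$, $p=0.35$, $\lambda/\theta=0.96$. It satisfies $d>1/p\approx2.86$ and $\lambda/\theta>1/(3p)\approx0.952$, yet $\mu^{(1)}=0.35\cdot1.96=0.686<1$. Equivalently $p_+<\theta_-/(\theta_-+\lambda_+)$, so the first bullet of Theorem~\ref{thm:VE-interval-bounds} itself gives almost sure extinction for every mechanism. The corollary as stated is therefore false.

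Even for $d=3$, where D1 does survive, it fails for D2--D4 near the corner $p=x=\tfrac12$. There $\E[1/(Y+2)]=12\log\tfrac32-\tfrac92\approx0.366>\tfrac13$, i.e.\ $\mu^{(4)}\approx0.81$, while $\mu^{(3)}=6/7$ and $\mu^{(2)}=26/27$. By continuity the same holds for $\tfrac12<x<p$ close to $\tfrac12$.

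The paper's proof reaches the opposite conclusion only through a slip. It integrates $\frac{(1-p)t^{d-2}+p\,t^{d-1}}{1+at}$, whereas the formula in Example~\ref{ex:poisson-binomial}, like your pgf $G_Y(t)=(1-p+pt)/(1+a(1-t))$, has denominator $1+a(1-t)$. The ``dramatic simplification'' at $p=1/d$, $a=1/(d-1)$ is an artifact of this mix-up, so there is nothing to recover along that route either.

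If you want a correct statement of the same kind, your decomposition gives one. Since $y-\frac{dy}{y+d-1}=\frac{y(y-1)}{y+d-1}\le\frac{y(y-1)}{d}$ for integers $y\ge0$, and $\E[Y(Y-1)]=2x(p+x)$, one has $\mu^{(4)}\ge(p+x)(1-2x/d)$. Hence $(p_-+x)(1-2x/d_-)>1$ suffices for survival under all four mechanisms.
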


\begin{proof}
From the proof of Theorem~\ref{thm:VE-interval-bounds}, survival for all mechanisms is guaranteed as soon as \(\mu^{(4)}_{\min}>1\), where
\[
\mu^{(4)}_{\min}=d_--d_-(d_--1)\!\int_0^1
\frac{(1-p_-)t^{d_--2}+p_-t^{d_--1}}{1+at}\,dt,
\qquad a=\frac{\lambda_-p_-}{\theta_+}.
\]
Define \(I(p,a)=\int_0^1\frac{(1-p)t^{d_--2}+p\,t^{d_--1}}{1+at}\,dt\). The numerator satisfies
\[
\frac{\partial}{\partial p}\bigl[(1-p)t^{d_--2}+p\,t^{d_--1}\bigr]
= t^{d_--2}(t-1)\le0\qquad(0\le t\le1),
\]
so the integrand is pointwise non‑increasing in \(p\); hence \(I(p,a)\) is non‑increasing in \(p\).  Moreover, \(I(p,a)\) is strictly decreasing in \(a\). The hypotheses \(d_->1/p_-\) and \(\frac{\lambda_-}{\theta_+}>\frac1{(d_--1)p_-}\) are exactly \(p_->1/d_-\) and \(a>1/(d_--1)\).  Consequently,
\[
I(p_-,a)<I(1/d_-,1/(d_--1)).
\]
At the boundary \(p=1/d_-\), \(a=1/(d_--1)\) the integrand simplifies dramatically:
\[
\frac{(1-p)t^{d_--2}+p\,t^{d_--1}}{1+at}
 =\frac{\frac{d_--1}{d_-}t^{d_--2}+\frac1{d_-}t^{d_--1}}
        {1+\frac{t}{d_--1}}
 =\frac{d_--1}{d_-}\,t^{d_--2}.
\]
Thus
\[
I(1/d_-,1/(d_--1))=\frac{d_--1}{d_-}\int_0^1 t^{d_--2}dt
 =\frac1{d_-}.
\]
Therefore \(I(p_-,a)<1/d_-\), and
\[
\mu^{(4)}_{\min}=d_--d_-(d_--1)\,I(p_-,a)
 >d_--d_-(d_--1)\frac1{d_-}=1.
\]
The claim follows from Theorem~\ref{thm:VE-interval-bounds} (survival for all mechanisms whenever \(\mu^{(4)}_{\min}>1\)).
\end{proof}

\begin{theorem}[Random environment: simple sufficient conditions]
\label{thm:RE-poisson-binomial}
Consider the Poissonian growth with binomial survival of Example~\ref{ex:poisson-binomial}.  Let $(\mathcal E_n)_{n\ge0}$ be a stationary ergodic random environment whose marginal distribution is that of the random vector $\mathcal E=(\theta,\lambda,p,d)$ with $\theta>\lambda$\ a.s.\ and $d\in\mathbb N$, $d\ge2$.  Assume
\[
\E[|\log p|]<\infty,\qquad
\E[|\log(1+\lambda/\theta)|]<\infty,
\]
and that $\E\bigl[\bigl|\log\bigl(\p(\xi_{1,0}>0\mid\mathcal E_0)\bigr)\bigr|\bigr]<+\infty$ (which is automatically satisfied for this model under the previous hypotheses).  Then, for every dispersion mechanism $\Delta^{(j)}$, $j=1,2,3,4$:
\begin{itemize}
  \item if
  $\displaystyle \E[\log p] < \E\!\left[\log\frac{\theta}{\theta+\lambda}\right]$,
  the process becomes extinct almost surely;
  \item if almost surely
  \[
  p > \frac1d \qquad\text{and}\qquad \frac{\lambda}{\theta} > \frac1{(d-1)p},
  \]
  the process survives with positive probability.
\end{itemize}
\end{theorem}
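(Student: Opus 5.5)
The plan is to reduce both claims to Theorem~\ref{thm:RE-classification} by sandwiching the log-means of all four mechanisms, exactly as in the varying-environment proofs. By Lemma~\ref{lem:ordering}, applied pointwise to each realization of $\mathcal E_n$, we have $\ell_n^{(4)}\le\ell_n^{(j)}\le\ell_n^{(1)}$ for every $n$ and $j=1,2,3,4$. Hence
\[
\E[\ell_0^{(4)}]\le\E[\ell_0^{(j)}]\le\E[\ell_0^{(1)}]
\]
as soon as these expectations are well defined. The first task is therefore to check the integrability hypothesis $\E[|\ell_0^{(j)}|]<\infty$ of Theorem~\ref{thm:RE-classification}. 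For D1 it is immediate, since $\ell_0^{(1)}=\log p+\log(1+\lambda/\theta)$.

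For $j=2,3,4$ I will bound $\ell_0^{(j)}$ from both sides. The upper bound is $\ell_0^{(j)}\le\ell_0^{(1)}$. For the lower bound, every mechanism produces at least one colony when $Y\ge1$, so
\[
\mu^{(j)}\ge\p(Y\ge1\mid\mathcal E)\ge\p(\eta(T)=1,\ \text{the individual survives})=p\,\frac{\theta}{\theta+\lambda}.
\]
Thus $\ell_0^{(j)}\ge\log p-\log(1+\lambda/\theta)$, and $|\ell_0^{(j)}|\le|\log p|+|\log(1+\lambda/\theta)|$, which is integrable by assumption. The same bound gives $|\log\p(\xi_{1,0}>0\mid\mathcal E_0)|\le|\log p|+|\log(1+\lambda/\theta)|$, because $\p(\xi>0)=\p(Y\ge1)$ for every mechanism. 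This verifies the parenthetical claim in the statement. I expect this integrability bookkeeping to be the only delicate point, and the crude bound $\p(Y\ge1)\ge p\theta/(\theta+\lambda)$ should handle it.

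\emph{Extinction.} We have $\E[\ell_0^{(1)}]=\E[\log p]-\E[\log(\theta/(\theta+\lambda))]$, which is negative under the hypothesis. Hence $\Lambda^{(j)}\le\Lambda^{(1)}<0$ for each $j$, and Theorem~\ref{thm:RE-classification}(b) gives almost sure extinction.

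\emph{Survival.} The argument of Corollary~\ref{cor:VE-survival-sufficient} is pointwise in the environment. For a fixed environment $(\theta,\lambda,p,d)$ with $d\ge2$, $p>1/d$ and $a=\lambda p/\theta>1/(d-1)$, it shows that
\[
I(p,a)<I\bigl(1/d,1/(d-1)\bigr)=1/d,
\]
and hence $\mu^{(4)}(\mathcal E)>1$. Applying this to almost every realization yields $\ell_0^{(4)}>0$ almost surely. Since $\ell_0^{(4)}$ is integrable, $\Lambda^{(4)}=\E[\ell_0^{(4)}]>0$; a strictly positive random variable has strictly positive mean, so no uniform bound is needed. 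By the ordering, $\Lambda^{(j)}\ge\Lambda^{(4)}>0$, and Theorem~\ref{thm:RE-classification}(a) gives survival with positive probability for every $j$.
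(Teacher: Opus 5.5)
\textbf{The survival step has a genuine gap, and it is the same one as in the paper's own proof.} Your extinction argument is fine. Your integrability check is also correct, and it supplies what the paper only asserts. For every mechanism, $\Delta^{(j)}(r)\ge1$ iff $r\ge1$, so
\[
p\,\theta/(\theta+\lambda)\le\p(\xi_{1,0}>0\mid\mathcal E)\le\mu^{(j)}\le\mu^{(1)}.
\]
The delicate point, however, is not this bookkeeping. It is the step where you import the pointwise estimate of Corollary~\ref{cor:VE-survival-sufficient}. That corollary evaluates the wrong integral. With $a=\lambda p/\theta$, Example~\ref{ex:poisson-binomial} gives $G_Y(z)=(1-p+pz)/(1+a(1-z))$, so Lemma~\ref{lem:mean-D4} yields
\[
\mu^{(4)}=d-d(d-1)\int_0^1\frac{t^{d-2}(1-p+pt)}{1+a(1-t)}\,dt,
\]
whereas the corollary's $I(p,a)$ has denominator $1+at$. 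The cancellation giving $I=1/d$ at $(p,a)=(1/d,1/(d-1))$ happens only with the wrong denominator. With the correct one, monotonicity in $p$ and $a$ still holds, but the boundary value is not $1/d$, so nothing gives $\mu^{(4)}>1$.

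\textbf{The survival bullet is in fact false, so the gap cannot be closed.} Take the constant (hence stationary ergodic) environment $\theta=10$, $\lambda=9$, $p=2/5$, $d=4$. Then $p>1/4$ and $\lambda/\theta=9/10>5/6=1/((d-1)p)$, so the survival hypotheses hold. Yet $\mu^{(1)}=p(1+\lambda/\theta)=19/25<1$. By Lemma~\ref{lem:ordering}, all four processes are then subcritical Galton--Watson processes and die out a.s. Indeed, the extinction hypothesis $p<\theta/(\theta+\lambda)=10/19$ holds at the same time. In general, $\theta>\lambda$ forces $\mu^{(1)}<2p$, while the survival hypotheses permit $p\in(1/(d-1),1/2)$ once $d\ge4$.

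The defect is not only about small $p$. For $\theta=12$, $\lambda=11$, $p=3/5$, $d=3$, the hypotheses hold and $\mu^{(1)}=23/20>1$. But $G_Y(z)=(8+12z)/(31-11z)$, and Lemma~\ref{lem:mean-D3} gives $\mu^{(3)}=3\bigl(1-G_Y(2/3)\bigr)=69/71<1$, hence $\mu^{(4)}<1$ as well.

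So the second bullet, and Corollary~\ref{cor:VE-survival-sufficient}, must be replaced by a condition derived from the correct expression for $\mu^{(4)}$.
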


\begin{proof}
From Lemma~\ref{lem:ordering} we have, pointwise on the environment,
\[
\mu^{(4)}(\mathcal E)\le\mu^{(j)}(\mathcal E)\le\mu^{(1)}(\mathcal E)
\]
for $j=1,2,3,4$, and therefore
\[
\E[\ell_0^{(4)}]\le\E[\ell_0^{(j)}]\le\E[\ell_0^{(1)}].
\]

For the Poissonian–binomial model of Example~\ref{ex:poisson-binomial}, $\mu^{(1)}=p(1+\lambda/\theta)$. Hence $\log\mu^{(1)}=\log p-\log(\theta/(\theta+\lambda))$, and the hypothesis $\E[\log p]<\E[\log(\theta/(\theta+\lambda))]$ is exactly $\E[\log\mu^{(1)}]<0$. By the upper bound, $\E[\log\mu^{(j)}]<0$ for all $j$, and Theorem~\ref{thm:RE-classification} forces almost sure extinction.

Now assume the almost sure inequalities hold. Set $a=\lambda p/\theta$; the hypotheses are exactly $p>1/d$ and $a>1/(d-1)$.  We show that under these conditions $\mu^{(4)}>1$ almost surely. From Lemma~\ref{lem:mean-D4} and
the integral representation derived in Example~\ref{ex:poisson-binomial},
\[
\mu^{(4)}=d-d(d-1)\!\int_0^1\frac{(1-p)t^{d-2}+p\,t^{d-1}}{1+at}\,dt .
\]
As in the proof of Corollary~\ref{cor:VE-survival-sufficient}, the integrand
divided by $1/(1+at)$ is non‑increasing in $p$, and the whole expression is
strictly decreasing in $a$. Therefore, for every realisation,
\[
\mu^{(4)} > d - d(d-1)\!\int_0^1\frac{(1-p)t^{d-2}+p\,t^{d-1}}{1+at}\,dt
        \Bigg|_{p=1/d,\;a=1/(d-1)} .
\]
At the boundary $p=1/d$, $a=1/(d-1)$ the calculation simplifies
dramatically, giving
\[
\mu^{(4)}\big|_{p=1/d,\;a=1/(d-1)}
= d - d(d-1)\cdot\frac1d = 1 .
\]
Thus, under the strict inequalities $p>1/d$ and $a>1/(d-1)$, we have
$\mu^{(4)}>1$ almost surely.  Consequently $\log\mu^{(4)}>0$ a.s., so
$\E[\log\mu^{(4)}]>0$, and by Lemma~\ref{lem:ordering}
$\E[\log\mu^{(j)}]\ge\E[\log\mu^{(4)}]>0$ for all $j$.
Theorem~\ref{thm:RE-classification} yields survival with positive probability
for every dispersion mechanism.
\end{proof}

\subsection{Case studies with Yule--Simon growth}
\label{sec:yule-examples}

In all examples of this subsection the internal growth is Yule with rate $\lambda$, survival after a catastrophe is binomial with probability $p$, and we use the full dispersal mechanism (D1) unless stated otherwise.

\begin{example}[Varying catastrophe intensity]
\label{ex:VE-catastrophe}
We model a situation in which newly founded communities progressively move away from an epicenter of catastrophic events, but where a minimal level of catastrophe intensity remains unavoidable. This is represented by
\[
\theta_n = \lambda + \varepsilon + a_n>0,
\qquad \text{for a \ \ }\varepsilon>0,
\]
where \((a_n)_{n\ge1}\) is a deterministic sequence satisfying
\(a_n\downarrow0\). Hence, the catastrophes become milder as communities move away from the epicenter, but a baseline risk remains.

The varying environment $(\mathcal E_n)$ is so that $\mathcal E_n = (\theta_n,\lambda,p,d)$. 
The induced offspring mean is
\[
\mu_n = p\,\frac{\theta_n}{\theta_n-\lambda}
      = p\,\frac{\lambda+\varepsilon+a_n}{\varepsilon+a_n}.
\]
\end{example}

\begin{proposition}[Survival under decreasing catastrophe intensity]
\label{prop:VE-catastrophe}
In Example~\ref{ex:VE-catastrophe} the process survives with positive probability
whenever
\[
p > \frac{\varepsilon}{\lambda+\varepsilon} .
\]
\end{proposition}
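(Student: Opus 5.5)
The plan is to apply Theorem~\ref{thm:VE-classification}(b) directly. Since the environment is deterministic, it suffices to show $\underline{\Lambda}=\liminf_n S_n/n>0$, where $S_n=\sum_{k<n}\log\mu_k$. We also need $0<\mu_n<\infty$ for every $n$.

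First, check finiteness and positivity of the means. With Yule growth, $\E[\eta(T)]=\theta_n/(\theta_n-\lambda)$ is finite because $\theta_n-\lambda=\varepsilon+a_n\ge\varepsilon>0$. Under binomial survival and full dispersal (D1), $\mu_n=p\,\E[\eta(T)]$, which is the formula in Example~\ref{ex:VE-catastrophe}. It is strictly positive since $p>0$, which follows from $p>\varepsilon/(\lambda+\varepsilon)>0$.

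Next, bound $\mu_n$ below uniformly in $n$. The map $x\mapsto (\lambda+x)/x=1+\lambda/x$ is decreasing on $(0,\infty)$. We have $\varepsilon+a_n\le\varepsilon+a_0$, or more simply $a_n\downarrow0$ with $a_n\ge0$. This gives two options:
\[
\mu_n \ge p\,\frac{\lambda+\varepsilon+\sup_k a_k}{\varepsilon+\sup_k a_k}
\qquad\text{or directly}\qquad
\mu_n\to \mu_\infty:=p\,\frac{\lambda+\varepsilon}{\varepsilon}.
\]
The cleanest route is the second. Since $a_n\to0$, continuity gives $\mu_n\to\mu_\infty$, hence $\log\mu_n\to\log\mu_\infty$. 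By Cesàro averaging, $S_n/n\to\log\mu_\infty$. Alternatively, monotonicity shows $\mu_n$ increases to $\mu_\infty$, and then $\liminf S_n/n=\log\mu_\infty$ follows either way.

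Finally, observe that $\mu_\infty>1$ is exactly equivalent to $p>\varepsilon/(\lambda+\varepsilon)$. This yields $\underline\Lambda=\log\mu_\infty>0$, and Theorem~\ref{thm:VE-classification}(b) gives survival with positive probability.

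The only mildly delicate point is the direction of monotonicity. $\mu_n$ is smallest for small $n$, when $a_n$ is large, so we cannot claim $\mu_n>1$ for every $n$. This is why the Cesàro limit is used rather than a uniform lower bound. Finitely many early generations with $\mu_n<1$ do not affect $\liminf S_n/n$.
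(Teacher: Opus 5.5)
Your proof is correct and matches the paper's argument: you show $\ell_n \to \log p + \log(1+\lambda/\varepsilon)$, use the Ces\`aro limit to get $\underline\Lambda>0$ exactly when $p>\varepsilon/(\lambda+\varepsilon)$, and invoke Theorem~\ref{thm:VE-classification}(b). Your explicit check that $0<\mu_n<\infty$ (from $a_n\ge 0$, so $\theta_n-\lambda\ge\varepsilon$) verifies a hypothesis of that theorem which the paper's proof leaves implicit.
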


\begin{proof}
As $a_n\to0$,
$\ell_n = \log p + \log(\lambda+\varepsilon+a_n) - \log(\varepsilon+a_n)
      \to \log p + \log(1+\lambda/\varepsilon)$.
If the limit is strictly positive, then $\underline\Lambda>0$ and
Theorem~\ref{thm:VE-classification} yields survival.  The condition can always be
satisfied by choosing $p$ sufficiently close to $1$ or $\varepsilon$ sufficiently
small compared to $\lambda$.
\end{proof}

\begin{example}[Varying fertility under cumulative stress]
\label{ex:VE-fertility}
Repeated catastrophic events may have long-term biological or social effects, reducing fertility across generations. We model long-term biological or social damage caused by repeated catastrophes, leading to a gradual loss of fertility. Let
\[
\lambda_n = \lambda_0\,e^{-\beta n},
\qquad \beta>0,
\]
with \(\theta>\lambda_0\) fixed. Under full dispersion (D1),
\[
\mu_n
=
p\,\frac{\theta}{\theta-\lambda_n},
\qquad
\ell_n
=
\log p + \log\!\left(\frac{\theta}{\theta-\lambda_n}\right).
\]
\end{example}

The next result shows that, even with mild catastrophes, cumulative damage to fertility can dominate and drive extinction.

\begin{proposition}[Extinction by cumulative fertility loss]
\label{prop:VE-fertility}
In Example~\ref{ex:VE-fertility}, if $p<1$ the process becomes extinct almost surely for all dispersal mechanisms $\Delta^{(j)}$, with $j=1,2,3,4$.
\end{proposition}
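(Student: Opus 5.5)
The plan is to reduce everything to the full-dispersal mean via the ordering of Lemma~\ref{lem:ordering}, and then apply part (a) of Theorem~\ref{thm:VE-classification}.

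\textbf{Step 1 (mean under D1).} Under full dispersal with Yule growth and binomial survival, $\mu_n^{(1)}=p\,\E[\eta(T)]=p\,\theta/(\theta-\lambda_n)$. This holds because $\E[N_m]=pm$ and $\E[\eta(T)]=\theta/(\theta-\lambda_n)$ whenever $\theta>\lambda_n$, which is guaranteed here since $\lambda_n\le\lambda_0<\theta$.

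\textbf{Step 2 (Ces\`aro limit).} Since $\lambda_n=\lambda_0e^{-\beta n}\to0$, we get $\log\bigl(\theta/(\theta-\lambda_n)\bigr)\to0$, hence $\ell^{(1)}_n\to\log p$. Ces\`aro averaging then gives
\[
\frac{S_n^{(1)}}{n}\to\log p<0
\]
whenever $p<1$. In fact the correction terms are summable, because $\log\bigl(\theta/(\theta-\lambda_n)\bigr)\le \lambda_n/(\theta-\lambda_0)$ and $\sum_n\lambda_n<\infty$. So $S^{(1)}_n=n\log p+O(1)$, which makes the conclusion robust.

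\textbf{Step 3 (transfer to D2--D4).} By Lemma~\ref{lem:ordering}, $\mu_n^{(j)}\le\mu_n^{(1)}$ for each $n$. Hence $\ell_n^{(j)}\le\ell_n^{(1)}$ and
\[
\limsup_n \frac{S_n^{(j)}}{n}\le\log p<0 .
\]

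\textbf{Step 4 (conclusion).} Part (a) of Theorem~\ref{thm:VE-classification} then gives almost sure extinction for every $j=1,2,3,4$.

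\textbf{Main obstacle: the hypothesis $\mu_n^{(j)}>0$.} Theorem~\ref{thm:VE-classification} requires $0<\mu_n^{(j)}<\infty$. Finiteness follows from the bound $\mu_n^{(j)}\le\mu_n^{(1)}$. Positivity holds because $p>0$ (implicitly) and $\P(\eta(T)\ge1)=1$ give $\P(Y\ge1)>0$, and each $\Delta^{(j)}(r)\ge1$ for $r\ge1$.

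A cleaner route, which I would prefer, avoids the positivity issue and the cited theorem altogether. By Markov's inequality, $\P(Z_n>0)\le\E[Z_n]=\prod_{k<n}\mu_k^{(j)}\le e^{S_n^{(1)}}\to0$. Since $\{Z_n>0\}$ is decreasing in $n$, this yields extinction directly and would serve as a self-contained justification.
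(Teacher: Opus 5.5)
Your proof is correct and follows the paper's route: it shows the Ces\`aro limit $S_n^{(1)}/n\to\log p<0$ using the summability of $\log\bigl(\theta/(\theta-\lambda_n)\bigr)$, then applies part (a) of Theorem~\ref{thm:VE-classification}. Your Step~3, which transfers the bound to D2--D4 via Lemma~\ref{lem:ordering}, spells out a step the paper's proof leaves unstated, and your first-moment argument $\p(Z_n>0)\le\E[Z_n]\le e^{S_n^{(1)}}\to0$ is a valid self-contained alternative.
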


\begin{proof}
Since \(\lambda_n=\lambda_0 e^{-\beta n}\to0\), one has
$\sum_{n\ge1}\log\!\bigl(\theta/(\theta-\lambda_n)\bigr)<\infty$, and therefore
\[
\ell_n \longrightarrow \log p < 0,
\qquad
\frac{1}{n}\sum_{k=1}^n \ell_k \longrightarrow \log p.
\]
Hence $\overline\Lambda = \log p <0$ whenever $p<1$, and Theorem~\ref{thm:VE-classification} forces extinction.
\end{proof}

Now we consider models with environments that evolve randomly while the internal growth and
catastrophe parameters remain within a safe range ($\theta>\lambda$).

\begin{example}[Correlated catastrophe intensity]
\label{ex:RE-correlated}

We consider a stationary ergodic random environment modeling alternating periods
of recovery and collapse, while remaining uniformly away from the infinite-mean
regime. Fix $\varepsilon>0$ and assume $\theta_0>\lambda+\varepsilon$.

The catastrophe rate evolves as a Markov chain
\[
\theta_{n+1}
=
\begin{cases}
\lambda+\varepsilon + \gamma(\theta_n-(\lambda+\varepsilon)),
& \text{with probability } \rho,\\[1ex]
\theta_0,
& \text{with probability } 1-\rho,
\end{cases}
\qquad \gamma\in(0,1),\ \rho\in(0,1).
\]
Then $\theta_n\in[\lambda+\varepsilon,\theta_0]$ for all $n$. Under full dispersion~(D1),
\[
\mu_n
=
p\,\frac{\theta_n}{\theta_n-\lambda},
\qquad
\ell_n
=
\log p+\log\!\left(\frac{\theta_n}{\theta_n-\lambda}\right).
\]

Note that the parameter $\rho$ does not enter the expression of $\ell_n$ directly,
but affects the survival criterion through the stationary distribution of the
environment process $(\theta_n)$ (see Proposition~\ref{prop:RE-correlated}).
\end{example}

\begin{proposition}[Survival condition for correlated catastrophes]
\label{prop:RE-correlated}
In Example~\ref{ex:RE-correlated} the process survives with positive probability
iff
\[
L_{corr}:=(1-\rho)\sum_{k\ge0}\rho^k
\log\!\left(
\frac{\lambda+\varepsilon+(\theta_0-(\lambda+\varepsilon))\gamma^k}
{\varepsilon+(\theta_0-(\lambda+\varepsilon))\gamma^k}
\right) > \log\left(\frac{1}{p}\right),
\]
and becomes extinct a.s.\ iff $L_{corr}\le\log(1/p)$.
\end{proposition}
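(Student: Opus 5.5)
Under full dispersion (D1) the offspring mean is $\mu_n = p\,\theta_n/(\theta_n-\lambda)$, so $\ell_n=\log p+\log\frac{\theta_n}{\theta_n-\lambda}$. The plan is therefore to identify the stationary law of the chain $(\theta_n)$ and compute $\Lambda=\E[\ell_0]$ under it. Theorem~\ref{thm:RE-classification} then does the rest: (a) for $\Lambda>0$, and (b)--(c) for $\Lambda\le 0$.

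\emph{Stationary law.} Write $b:=\theta_0-(\lambda+\varepsilon)>0$, with $\theta_0$ understood as the fixed reset level. The chain lives on the countable set $\{\lambda+\varepsilon+b\gamma^k:k\ge0\}$. Each step either resets to level $k=0$ (probability $1-\rho$) or moves from level $k$ to level $k+1$ (probability $\rho$). Thus the index $K_n$ is a renewal-type ``age'' chain, and its unique stationary distribution is geometric:
\[
\pi(k)=(1-\rho)\rho^k .
\]
This is checked directly from the balance equations $\pi(0)=1-\rho$ and $\pi(k+1)=\rho\,\pi(k)$. The chain is irreducible and positive recurrent, since it returns to $0$ in a geometric time. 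Hence the stationary version of $(\theta_n)$ is a stationary ergodic sequence, and the environment is taken to be this stationary version, as implicit in the example. Then
\[
\E[\ell_0]=\log p+(1-\rho)\sum_{k\ge0}\rho^k\log\frac{\lambda+\varepsilon+b\gamma^k}{\varepsilon+b\gamma^k}=\log p+L_{corr}.
\]

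\emph{Integrability.} Since $\theta_n\in[\lambda+\varepsilon,\theta_0]$, we have
\[
\frac{\theta_n}{\theta_n-\lambda}\in\left[\frac{\theta_0}{\theta_0-\lambda},\,\frac{\lambda+\varepsilon}{\varepsilon}\right],
\]
so $\ell_0$ is bounded and $\E|\ell_0|<\infty$. For the second hypothesis of Theorem~\ref{thm:RE-classification}, note that $\p(\xi>0\mid\mathcal E)\ge\p(\eta(T)=1)\,p=p\,\theta/(\theta+\lambda)$. Indeed, $\eta(T)=1$ happens with probability $\theta/(\theta+\lambda)$ under Yule growth, and the single individual then survives with probability $p$. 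This lower bound is bounded away from $0$ uniformly on the range of $\theta$, so $\bigl|\log\p(\xi_{1,0}>0\mid\mathcal E_0)\bigr|$ is bounded as well.

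\emph{Conclusion and main obstacle.} If $L_{corr}>\log(1/p)$, then $\Lambda>0$ and case (a) gives survival; if $L_{corr}<\log(1/p)$, case (b) gives extinction. The delicate point is the boundary $\Lambda=0$. There case (c) applies, and it needs the offspring law to be non-degenerate. This holds because $\xi$ takes the value $0$ with positive probability (everyone dies, or $p<1$ kills the founder; note that $\Lambda=0$ forces $p<1$) and also takes values $\ge 2$ with positive probability. Checking this non-degeneracy, together with justifying the passage to the stationary version of the chain (the statement starts the chain from the reset state $\theta_0$), are the steps I would treat most carefully. For the latter, I would argue that the Markov chain couples with its stationary version at the first reset time, which is a.s.\ finite. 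After coupling, the survival/extinction event differs by only finitely many generations, so the classification is unaffected.
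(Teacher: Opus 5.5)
Your proposal is correct and follows essentially the same route as the paper. Both arguments identify the stationary law of the reset chain as geometric in the level index, $\pi(k)=(1-\rho)\rho^k$, compute $\E[\ell_0]=\log p+L_{corr}$, check the two integrability conditions, and invoke Theorem~\ref{thm:RE-classification}. Your treatment of the boundary case $\Lambda=0$ (it forces $p<1$, plus the non-degeneracy check) and your coupling of the chain started at the reset level with its stationary version fill in points the paper's proof leaves implicit. For the second integrability condition the paper uses the simpler bound $\p(\xi_{1,0}>0\mid\mathcal E_0)\ge p$ instead of your $p\,\theta/(\theta+\lambda)$, but either suffices.
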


\begin{proof}
Observe that the second integrability condition in Theorem~\ref{thm:RE-classification} is automatically satisfied because $\p(\xi_{1,0}>0\mid\mathcal E_0)\ge p>0$ almost surely and $p$ is constant.

The Markov chain $(\theta_n)$ is irreducible and aperiodic on the compact
interval $[\lambda+\varepsilon,\theta_0]$, hence admits a unique stationary
distribution $\pi$.
Consequently, $(\ell_n)$ is stationary and ergodic and
\[
\lim_{n\to\infty}\frac{1}{n}\sum_{k=1}^n \ell_k
=
\mathbb E_\pi[\ell_0]
\qquad\text{a.s.}
\]
Observe that the stationary distribution of $(\theta_n)$ is the law of
\[
\Theta=\lambda+\varepsilon+(\theta_0-(\lambda+\varepsilon))\gamma^K,
\]
where $K$ is geometric with parameter $1-\rho$.
Consequently,
\[
\mathbb E[\ell_0]
=
\log p
+
\sum_{k\ge0}(1-\rho)\rho^k
\log\!\left(
\frac{\lambda+\varepsilon+(\theta_0-(\lambda+\varepsilon))\gamma^k}
{\varepsilon+(\theta_0-(\lambda+\varepsilon))\gamma^k}
\right).
\]
The claim follows from
Theorem~\ref{thm:RE-classification}.
\end{proof}

Therefore, recovery phases correspond to gradual decreases in catastrophe intensity, but a
baseline level of catastrophic risk remains unavoidable. Survival is driven by
extended periods near the lower bound $\lambda+\varepsilon$, while collapses
reset the system to highly catastrophic states.

\begin{example}[Random environment driven by adaptive survival] \label{ex:RE-adaptive}

We consider a random environment acting on the survival probability,
modeling adaptive or learned resilience. Let \((p_n)\) be a Markov chain on
\((0,1)\) defined by
\[
p_{n+1}
=
\min\{1,\; a\,p_n + (1-a)\,\psi_{n+1}\},
\qquad a\in(0,1),
\]
where \((\psi_n)\) is an i.i.d.\ sequence with a continuous distribution on
\((0,1)\).
Assume \(\theta>\lambda\), and keep \(\lambda\), \(\theta\), and \(d\) fixed. Under dispersion D1,
\[
\mu_n
=
p_n\,\frac{\theta}{\theta-\lambda},
\qquad
\ell_n
=
\log p_n + \log\!\left(\frac{\theta}{\theta-\lambda}\right).
\]
\end{example}

We further develop some properties of the Markov chain $(p_n)$ before turning to the survival and extinction results.

\begin{proposition}[Control of the adaptive log-survival term]
\label{prop:logp-control}
In Example~\ref{ex:RE-adaptive}, assume that $\psi_0$ has a continuous distribution on $(0,1)$ and
that $\mathbb E[|\log \psi_0|]<+\infty$. Let $\pi_p$ denote the invariant
distribution of the Markov chain $(p_n)$ defined by
\[
p_{n+1}=\min\{1,\;a p_n+(1-a)\psi_{n+1}\},\qquad a\in(0,1).
\]
Then:
\begin{enumerate}
\item[\textnormal{(i)}]
$\log p_0\in L^1(\pi_p)$ and
\[
-\infty < \mathbb E_{\pi_p}[\log p_0] < 0.
\]
\item[\textnormal{(ii)}]
The map $a\mapsto \mathbb E_{\pi_p}[\log p_0]$ is continuous and nondecreasing on
$(0,1)$.
\item[\textnormal{(iii)}]
The following bounds hold:
\[
\mathbb E[\log \psi_0]
\;\le\;
\mathbb E_{\pi_p}[\log p_0]
\;\le\;
\log\!\bigl(a+(1-a)\mathbb E[\psi_0]\bigr).
\]
\end{enumerate}
\end{proposition}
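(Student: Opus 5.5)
The plan is to remove the truncation, solve the recursion explicitly, and obtain all three items from a single representation of the stationary law.

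\emph{Removing the minimum.} Since $\psi_{n+1}\in(0,1)$ a.s., if $p_n\le 1$ then $a p_n+(1-a)\psi_{n+1}<a+(1-a)=1$. Hence the minimum in the recursion is never active, and the chain is the linear AR(1) recursion $p_{n+1}=a p_n+(1-a)\psi_{n+1}$ on $(0,1)$. This map is a contraction with factor $a$, so $(p_n)$ has a unique invariant law $\pi_p$. I will realise $\pi_p$ by the backward series
\[
p_0\overset{d}{=}P(a):=(1-a)\sum_{k\ge0}a^k\psi_{k},
\]
with $(\psi_k)$ i.i.d.\ copies of $\psi_0$. The weights $w_k(a)=(1-a)a^k$ are nonnegative and sum to $1$, so $P(a)$ is a convex combination of i.i.d.\ variables in $(0,1)$. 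In particular $P(a)\in(0,1)$ a.s.\ and $\E[P(a)]=\E[\psi_0]$.

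\emph{Items (iii) and (i).} For the lower bound in (iii), I will apply concavity of $\log$ to the convex combination, which gives $\log P(a)\ge\sum_k w_k(a)\log\psi_k$ pathwise. Taking expectations (Tonelli applies because $\log\psi_k\le0$) yields $\E_{\pi_p}[\log p_0]\ge\E[\log\psi_0]>-\infty$. For the upper bound, Jensen's inequality gives $\E[\log P(a)]\le\log\E[P(a)]=\log\E[\psi_0]$, and this is $\le\log(a+(1-a)\E[\psi_0])$ because $\E[\psi_0]\le 1$. Item (i) then follows. Since $\log P(a)\le 0$ and its expectation is bounded below, we get $\log p_0\in L^1(\pi_p)$. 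Moreover $\E[\psi_0]<1$ because $\psi_0<1$ a.s., so $\E_{\pi_p}[\log p_0]\le\log\E[\psi_0]<0$.

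\emph{Continuity in (ii).} I will couple all $a$ through the same sequence $(\psi_k)$. Then $a\mapsto P(a)$ is continuous pathwise, because the series converges uniformly for $a$ in compact subsets of $(0,1)$. To pass the limit through the expectation I will use Pratt's generalized dominated convergence. Set $g_a:=\sum_k w_k(a)(-\log\psi_k)$. We have $0\le-\log P(a)\le g_a$, $g_a\to g_{a_0}$ pointwise as $a\to a_0$, and $\E[g_a]=\E[-\log\psi_0]$ is constant in $a$. Together with pointwise continuity of $-\log P(a)$, this gives continuity of $a\mapsto\E[\log P(a)]$.

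\emph{Monotonicity in (ii), the main obstacle.} The idea is that larger $a$ spreads the weights more evenly, so $P(a)$ should decrease in the convex order as $a$ increases. Since $-\log$ is convex, this would give that $\E[\log P(a)]$ is nondecreasing. To make it rigorous I will use majorization. The weight sequence $w(a)$ is already sorted decreasingly, with partial sums $\sum_{k<n}w_k(a)=1-a^n$. For $a<a'$ we have $1-a^n\ge 1-a'^n$, so $w(a)$ majorizes $w(a')$. The Marshall--Olkin theorem says that $w\mapsto\E[\varphi(\sum_k w_k\psi_k)]$ is Schur-convex for i.i.d.\ $\psi_k$ and convex $\varphi$. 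Applying it with $\varphi=-\log$ gives $\E[-\log P(a)]\ge\E[-\log P(a')]$. Because the theorem is stated for finite vectors, I will first truncate both sequences at $N$ terms, putting the leftover masses $a^N$ and $a'^N$ on one extra i.i.d.\ copy; majorization is preserved because the partial sums are unchanged. I will then let $N\to\infty$ using the same Pratt-type domination as above, which is the delicate point. If that truncation argument becomes awkward, the fallback is a direct construction: write $P(a)=\E[P(a')\mid\mathcal G]$ for a suitable coarser $\sigma$-field, obtained by aggregating blocks of the sequence $(\psi_k)$.
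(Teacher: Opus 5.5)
Your proof is correct in substance and takes a different route from the paper. The paper never solves the recursion. It gets integrability in (i) from the one-step bound $p_{n+1}\ge(1-a)\psi_{n+1}$, and the upper bound in (iii) from concavity of $\log$ in the stationary equation together with $\mathbb{E}_{\pi_p}[p_0]\le 1$. For (ii) it asserts that $\min\{1,ap+(1-a)\psi\}$ is increasing in $a$, so that $\pi_p^{(a)}$ is stochastically increasing, and it obtains continuity from an unspecified dominated convergence.

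Your observation that the truncation is never active, so that $\pi_p$ is the law of $P(a)=(1-a)\sum_k a^k\psi_k$, buys considerably more:
\begin{itemize}
\item It gives the sharper upper bound $\log\mathbb{E}[\psi_0]$.
\item It gives the lower bound $\mathbb{E}[\log\psi_0]$ exactly. The paper's one-step inequality only yields $\log(1-a)+\mathbb{E}[\log\psi_0]$; the sharp bound needs concavity applied to the convex combination, as you do.
\item It supplies an explicit common coupling for the continuity argument.
\item It identifies the correct mechanism for (ii).
\end{itemize}
The paper's monotonicity step does not work as written. The quantity $ap+(1-a)\psi$ decreases in $a$ when $p<\psi$. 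Moreover, the stationary laws cannot be stochastically ordered in $a$: they all have mean $\mathbb{E}[\psi_0]$, while $\mathrm{Var}(P(a))=\frac{1-a}{1+a}\mathrm{Var}(\psi_0)$ strictly decreases. What actually proves that $a\mapsto\mathbb{E}_{\pi_p}[\log p_0]$ is nondecreasing is your convex-order comparison combined with concavity of $\log$.

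Two local points need fixing.

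\emph{Truncation.} Your justification that majorization survives truncation is not right. The vector $((1-a),(1-a)a,\dots,(1-a)a^{N-1},a^N)$ is not decreasingly ordered when $a>1/2$, because $a^N>(1-a)a^{N-1}$. So its natural-order partial sums are not the ones that define majorization. For small $N$ the order can even reverse: for $N=1$ and $1/2<a<a'$, the vector $(1-a',a')$ majorizes $(1-a,a)$. The conclusion does hold for all $N\ge N_0(a,a')$, though. The lump $a'^N$ enters the top-$k$ sum of the $a'$-vector only when $N-k+1<\log(1-a')/\log a'$. In that range, $1-a'^{k-1}+a'^N\le 1-a^k$ as soon as $a\,(a/a')^{k-1}\le 1-a'$, and this holds once $N$ is large. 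Since you only need large $N$ before passing to the limit with your Pratt-type domination, this repairs the step.

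\emph{Fallback.} Your fallback is stated in the wrong direction. The identity $P(a)=\mathbb{E}[P(a')\mid\mathcal G]$ with $a<a'$ would make $P(a)$ smaller in convex order, and hence give $\mathbb{E}[\log P(a)]\ge\mathbb{E}[\log P(a')]$, which is false. What you would need is that $P(a')$ be, in law, a conditional expectation of a copy of $P(a)$.
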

\begin{proof}
We begin proving item (i). Since $p_n\in(0,1]$ almost surely, $\log p_n\le0$.
Moreover, from the recursion,
\[
p_{n+1}\ge (1-a)\psi_{n+1},
\]
and hence
\[
\log p_{n+1}\ge \log(1-a)+\log\psi_{n+1}.
\]
Taking expectations under stationarity and using $\mathbb E[|\log\psi_0|]<\infty$ yields $\mathbb E_{\pi_p}[|\log p_0|]<\infty$ and finiteness. Strict negativity follows from $\mathbb P(p_0<1)>0$.

We now turn to item (ii). For fixed $\psi$, the map $p\mapsto \min\{1,ap+(1-a)\psi\}$ is increasing in $a$. By standard monotone coupling arguments for Markov chains, the stationary distribution $\pi_p^{(a)}$ is stochastically increasing in $a$. Since $\log p$ is
increasing, $\mathbb E_{\pi_p}[\log p_0]$ is nondecreasing in $a$. Continuity follows from dominated convergence.

It remains to verify (iii). The lower bound follows from Jensen’s inequality applied to the inequality $p_{n+1}\ge(1-a)\psi_{n+1}$. For the upper bound, taking expectations in the stationary equation and using concavity of $\log$ yields
\[
\mathbb E_{\pi_p}[\log p_0]
=
\mathbb E_{\pi_p}\!\left[\log\bigl(a p_0+(1-a)\psi_1\bigr)\right]
\le
\log\!\bigl(a+(1-a)\mathbb E[\psi_0]\bigr).
\]
\end{proof}

The next proposition shows that, for Example \ref{ex:RE-adaptive}, survival depends not only on the average resilience but also on its variability. Rare generations with very small \(p_n\) may force extinction even when the average survival probability is large.

\begin{proposition}[Survival condition for adaptive survival]
\label{prop:RE-adaptive}
In Example~\ref{ex:RE-adaptive} the process survives with positive probability
iff
\[
\E_{\pi_p}[\log p_0] > \log\frac{\theta-\lambda}{\theta},
\]
and becomes extinct a.s.\ iff $\E_{\pi_p}[\log p_0] \le \log\frac{\theta-\lambda}{\theta}$, where $\pi_p$ is the
stationary distribution of $(p_n)$.
\end{proposition}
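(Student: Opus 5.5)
The plan is to reduce the statement to Theorem~\ref{thm:RE-classification} applied to the stationary ergodic sequence $\mathcal E_n=(\theta,\lambda,p_n,d)$. The log-mean is $\ell_n=\log p_n+\log(\theta/(\theta-\lambda))$, so under stationarity $\Lambda=\E[\ell_0]=\E_{\pi_p}[\log p_0]-\log\frac{\theta-\lambda}{\theta}$. The two displayed conditions are then exactly $\Lambda>0$ on one side and $\Lambda\le 0$ on the other, and the three cases (a)--(c) of Theorem~\ref{thm:RE-classification} give the dichotomy. The steps are to check the hypotheses of that theorem, evaluate $\Lambda$, and handle the boundary case $\Lambda=0$.

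\textbf{Step 1: the environment.} I will argue that $(p_n)$ has a unique invariant law $\pi_p$. The update map $p\mapsto\min\{1,ap+(1-a)\psi\}$ is a contraction with Lipschitz constant $a<1$, so the standard iterated-random-function argument applies: run the chain from the infinite past, $p_0=\lim_{m\to\infty}f_{\psi_0}\circ\cdots\circ f_{\psi_{-m}}(x)$. This yields a stationary version that is a measurable function of the i.i.d.\ sequence $(\psi_k)$. As a factor of an i.i.d.\ shift it is ergodic, and hence so are $(\mathcal E_n)$ and $(\ell_n)$.

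\textbf{Step 2: integrability.} I will take the integrability assumption $\E[|\log\psi_0|]<\infty$ from Proposition~\ref{prop:logp-control}. Item (i) of that proposition then gives $\E_{\pi_p}|\log p_0|<\infty$, so $\E|\ell_0|<\infty$. For the second condition, a colony has at least one individual and, under binomial survival and D1, $\xi_{1,0}=Y$. Therefore $\p(\xi_{1,0}>0\mid\mathcal E_0)\ge p_0$, so $0\le-\log\p(\xi_{1,0}>0\mid\mathcal E_0)\le-\log p_0$, which is integrable. Cases (a) and (b) of Theorem~\ref{thm:RE-classification} then give survival when $\Lambda>0$ and extinction when $\Lambda<0$. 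These also give the two ``iff'' directions, because the two conditions are complementary.

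\textbf{Step 3: the critical case $\Lambda=0$.} This is the step I expect to be the main obstacle. Case (c) requires a non-degenerate offspring law. Here $\xi_{1,0}$ takes the value $0$ with probability $\E[(1-p_0)\ldots]$-type mass whenever $p_0<1$, and it takes values $\ge 2$ with positive probability because $\eta(T)$ is unbounded. So $\xi_{1,0}$ is not a.s.\ equal to $1$ given $\mathcal E_0$. I would verify this quantitatively as $\p(\xi_{1,0}\neq1\mid\mathcal E_0)>0$ a.s., which holds since $\p(Y\ge2)>0$ for every $p_0>0$. Alternatively, I would note that the environment is non-degenerate: $\psi$ is continuous, so $\log p_0$ is not a.s.\ constant, and the recurrent-oscillation argument in Kersting--Vatutin applies. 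With this in hand, case (c) yields a.s.\ extinction when $\E_{\pi_p}[\log p_0]=\log\frac{\theta-\lambda}{\theta}$, which completes the equivalence.
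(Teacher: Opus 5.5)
Your proposal is correct and follows the same route as the paper. You verify the two integrability conditions of Theorem~\ref{thm:RE-classification} via Proposition~\ref{prop:logp-control}(i) and the bound $\p(\xi_{1,0}>0\mid\mathcal E_0)\ge p_0$, use ergodicity of $(p_n)$ to identify $\Lambda=\E_{\pi_p}[\log p_0]+\log\frac{\theta}{\theta-\lambda}$, and read off the dichotomy. You are also more complete than the paper, which simply asserts uniqueness of $\pi_p$ and ergodicity and leaves the boundary case $\Lambda=0$ implicit; your contraction/backward-iteration argument and your check that $\p(\xi_{1,0}\neq1\mid\mathcal E_0)>0$ supply exactly those details.
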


\begin{proof}
Note that the second integrability condition of Theorem \ref{thm:RE-classification} holds because Proposition~\ref{prop:logp-control} implies $\E_{\pi_p}[|\log p_0|]<\infty$, and we have $\p(\xi_{1,0}>0\mid\mathcal E_0)\ge p_0$ almost surely.

The chain \((p_n)\) admits a unique invariant distribution \(\pi_p\) and is
ergodic. By Birkhoff’s ergodic theorem,
\[
\lim_{n\to\infty}\frac{1}{n}\sum_{k=1}^n \ell_k
=
\mathbb E_{\pi_p}[\log p_0]
+
\log\!\left(\frac{\theta}{\theta-\lambda}\right)
\qquad\text{a.s.}
\]
Note that the parameters $a$ and the law of $(\psi_n)$ do not enter the expression
of $\mu_n$ directly, but determine the stationary distribution $\pi_p$ of the
environment process $(p_n)$ and hence the value of
$\mathbb E_{\pi_p}[\log p_0]$.
\end{proof}

\begin{remark}
Proposition~\ref{prop:RE-adaptive} shows that survival is determined by the
geometric mean of the survival probability, $\exp(\E_{\pi_p}[\log p_0])$, rather
than its arithmetic mean $\E_{\pi_p}[p_0]$.  By Jensen's inequality,
$\E_{\pi_p}[\log p_0] \le \log \E_{\pi_p}[p_0]$, with strict inequality whenever
$p_0$ is non-degenerate.  Hence, highly variable survival probabilities can lead
to extinction even when the average probability of surviving a catastrophe is
arbitrarily close to $1$.
\end{remark}

\subsection*{Acknowledgements}
The authors used a Large Language Model from DeepSeek to improve the clarity and linguistic flow of the manuscript. All mathematical content and conclusions remain the sole responsibility of the authors.

\bibliographystyle{abbrvnat}
\bibliography{references}

\end{document}